\documentclass[review,hidelinks,onefignum,onetabnum]{siamart251216}

\usepackage{lipsum}
\usepackage{amsfonts}
\usepackage{graphicx}
\usepackage{epstopdf}
\usepackage{algorithmic}
\ifpdf
  \DeclareGraphicsExtensions{.eps,.pdf,.png,.jpg}
\else
  \DeclareGraphicsExtensions{.eps}
\fi

\newsiamremark{remark}{Remark}
\newsiamremark{hypothesis}{Hypothesis}
\crefname{hypothesis}{Hypothesis}{Hypotheses}
\newsiamthm{claim}{Claim}
\newsiamremark{fact}{Fact}
\crefname{fact}{Fact}{Facts}

\headers{DECOUPLING SCALES VIA LSI AND SPLITTING}{Eric T. Chung, Lijian Jiang, Mengnan Li and Yajun Wang}

\title{Decoupling scales via localized subspace iteration and temporal splitting for multiscale parabolic equations \thanks{Submitted to the editors DATE.
\funding{Eric T. Chung's work is partially supported by the Hong Kong RGC General Research Fund (Project Nos. 14304525 and 14305624). Lijian Jiang acknowledges the support of the NSFC (Project No. 12271408). Mengnan Li acknowledges the support of the NSFC (Project No. 12401568) and the Basic Research Program of Jiangsu, China (Project No. BK20230346).}}}

\author{
  Eric T. Chung\thanks{Department of Mathematics, The Chinese University of Hong Kong, Hong Kong SAR
  (\email{eric.t.chung@cuhk.edu.hk}).}
\and Lijian Jiang \thanks{School of Mathematical Sciences, Tongji University, Shanghai, P.R. China
  (\email{ljjiang@tongji.edu.cn}).}
\and Mengnan Li \thanks{College of Science, Nanjing University of Posts and Telecommunications, Nanjing, P.R. China
 (\email{mengnanli@njupt.edu.cn}).}
\and Yajun Wang \thanks{Department of Mathematics, The Chinese University of Hong Kong, Hong Kong SAR
  (\email{yajunwang@cuhk.edu.hk}).}
  }

\usepackage{amsopn}

\usepackage{amsmath}
\usepackage{amssymb}
\usepackage{microtype}
\usepackage{subcaption}
\usepackage{graphicx}
\usepackage{booktabs}
\usepackage{multirow}
\usepackage{verbatim}

\ifpdf
\hypersetup{
  pdftitle={Decoupling scales via localized subspace iteration and temporal splitting for multiscale parabolic equations},
  pdfauthor={Eric T. Chung, Lijian Jiang , Mengnan Li and Yajun Wang}
}
\fi

\begin{document}

\maketitle

\begin{abstract}
Simulating diffusion in heterogeneous media presents a significant computational challenge, as resolving microscopic physical scales traditionally demands excessively fine computational grids. To overcome this barrier, we extend the Localized Subspace Iteration (LSI) framework to multiscale parabolic equations. The proposed method constructs optimal, low-dimensional trial spaces by iteratively approximating the dominant eigenspaces of local inverse operators via Localized Standard Subspace Iteration (LSSI) or Localized Krylov Subspace Iteration (LKSI). Because these LSI basis functions are inherently tailored to capture the slow-decaying, low-frequency modes of the parabolic solution, they naturally suppress error accumulation over long-term integration. To further improve computational efficiency, we decouple the basis construction into an offline phase and implement a contrast-independent, partially explicit temporal splitting scheme for online time-stepping. By explicitly advancing the dominant macroscopic modes while implicitly treating high-frequency microscopic corrections, this scheme guarantees stability without imposing restrictive time-step constraints. We establish rigorous a priori error estimates in both the energy and $L^2$ norms. Numerical experiments illustrate the accuracy and efficiency of the LSI framework, particularly highlighting the LKSI method's advantages in handling high-contrast, complex multiscale media.
\end{abstract}

\begin{keywords}
multiscale problems, generalized finite element method (GFEM), localized subspace iteration (LSI), spectral problems, parabolic equations
\end{keywords}

\begin{MSCcodes}
65M60, 65N30, 35B27
\end{MSCcodes}

\section{Introduction}

Multiscale time-dependent problems, particularly parabolic equations governing diffusion and heat transfer processes in heterogeneous media, are ubiquitous in scientific computing and engineering applications. Prime examples include diffusion in fractured porous media \cite{Hornung1990,liu2024numerical}, deformation and heat transfer processes of composite materials \cite{Biot1962,dong2023multi}, and simulation of subsurface flow or energy storage \cite{Chen2006,lyu2025multiscale}. In modern applications, these problems are increasingly characterized by high-contrast or highly oscillatory coefficients that vary across multiple spatial scales, demanding computational strategies that balance high-fidelity resolution with feasible computational costs.
Standard numerical methods, such as the finite element method (FEM), have significant limitations when addressing these challenges. As noted by Babu{\v{s}}ka and Osborn \cite{Babuska1983}, such methods require excessively fine computational grids to resolve the finest physical scales, leading to excessive computational costs and memory requirements.

To overcome these computational barriers, various numerical homogenization and multiscale model reduction techniques have been developed. Theoretical foundations were established by classical homogenization theory \cite{Bensoussan1978, Allaire1992}, which provides rigorous asymptotic limits for periodic structures. 
Building on these concepts, the Heterogeneous Multiscale Method (HMM) \cite{E2003,abdulle2012heterogeneous} introduced a general framework for coupling macro and micro models. The Variational Multiscale Method (VMS) \cite{Hughes1998} offered a paradigm for decomposing the solution into coarse and fine scales. Subsequent advancements in VMS have provided rigorous theoretical grounding for localized optimal projections \cite{hughes2007variational}.

In the context of basis construction, the Multiscale Finite Element Method (MsFEM) \cite{Hou1997, Hou1999} has achieved remarkable success by incorporating microscale information directly into the coarse-grid basis functions. Building upon the theoretical foundation of VMS, the Localized Orthogonal Decomposition (LOD) method \cite{Malqvist2014, Malqvist2020} provides a computationally feasible realization by utilizing localized orthogonal correctors to achieve optimal approximation properties, even without scale separation. 
Interestingly, recent studies reveal that the success of the LOD framework possesses an implicit connection to the spectral properties of local operators \cite{Guan2026}. To explicitly harness this spectral nature for media with highly complex, non-separable scales, introducing local eigenvalue problems directly into the basis construction became a natural progression. Almost simultaneously, the Multiscale Spectral Generalized Finite Element Method (MS-GFEM) \cite{Babuska2011,ma2022novel} and the Generalized Multiscale Finite Element Method (GMsFEM) \cite{Efendiev2013,chung2023multiscale} pioneered this strategy. These approaches demonstrated that computing the dominant eigenvectors of local operators captures essential multiscale features far more effectively than traditional polynomial bases. Building upon these modern spectral frameworks, the Constraint Energy Minimizing GMsFEM (CEM-GMsFEM) \cite{chung2018constraint,chung2023multiscale} further advanced the field by integrating oversampling and energy minimization techniques.

The transition from static elliptic to time-dependent parabolic problems has driven significant advancements across all major multiscale frameworks. Classical and modern methods, including classical homogenization \cite{brahim1992correctors}, MsFEM \cite{jiang2007multiscale}, VMS \cite{john2006two}, HMM \cite{ming2007analysis}, and LOD \cite{maalqvist2018multiscale}, have all been successfully extended to model transient diffusion processes. However, as temporal dynamics further complicates the multiscale landscape, relying solely on these spatial constructions often proves insufficient for complex media. Consequently, in the context of parabolic equations, the spectral concepts have been significantly extended. Chung et al. \cite{Chung2016} developed space-time GMsFEM to handle temporal heterogeneities, while Li et al. \cite{Li2019} applied the Constraint Energy Minimizing GMsFEM (CEM-GMsFEM) to parabolic equations, achieving high accuracy by incorporating oversampling and energy minimization. Furthermore, recent research has explored regularized coupling for multiphysics scenarios \cite{Guan2024}, highlighting the growing need for robust basis construction in complex time-dependent systems. Despite these advances, designing computationally efficient methods that explicitly target the spectral decay properties of diffusion operators remains an active area of research.

In our previous work \cite{Guan2026}, we introduced the Localized Subspace Iteration (LSI) method for elliptic problems. This framework unifies multiscale basis construction with classical spectral algorithms, demonstrating that iterative orthogonal decomposition is mathematically equivalent to subspace iteration for approximating the eigenspaces of local inverse operators. We proposed two efficient implementations: the Localized Standard Subspace Iteration (LSSI) and the Localized Krylov Subspace Iteration (LKSI). A key theoretical finding was that the sequence of function spaces generated by LSI converges to the eigenfunction subspace of the local inverse operator, which corresponds to the low-frequency modes of the original differential operator.

In this paper, we extend the LSI framework to multiscale parabolic equations. 
The motivation for this extension is deeply rooted in the inherent spectral nature of diffusion processes. The solution to a parabolic equation can be represented via a classical spectral expansion:
\begin{equation}
    u(x,t) = \sum_{k=1}^\infty \left[ c_k e^{-\lambda_k t} +\int_0^t e^{-\lambda_k (t-s)} f_k(s) ds \right]\phi_k(x),
\end{equation}
where $c_k = \langle u_0,\phi_k \rangle, f_k(s)= \langle f(x,t),\phi_k\rangle$. Here, $(\lambda_k, \phi_k)$ are the eigenpairs of the associated elliptic operator, $u_0$ is the initial condition, and $f(x,t)$ is the source term.
As time $t$ progresses, the high-frequency error components (corresponding to large $\lambda_k$) decay exponentially fast. Consequently, the long-term stability and accuracy of the numerical solution are dictated by how well the trial space approximates the leading low-frequency eigenspace. Since the LSI method is explicitly designed to iteratively approximate the dominant eigenspace of the local inverse operator, it is theoretically optimal for parabolic problems.

Beyond the challenges of spatial discretization, highly heterogeneous media impose severe stability and efficiency bottlenecks on temporal integration. Standard explicit time-stepping schemes suffer from prohibitively small time-step constraints due to high medium contrasts and fine-scale variations, while fully implicit methods demand the repeated inversion of massive, ill-conditioned global systems. To mitigate these computational burdens, various Implicit-Explicit (IMEX) \cite{ascher1995implicit} and temporal splitting schemes have been explored, with contrast-independent partially explicit time discretizations \cite{chung2021contrast} showing particular promise in multiscale flow problems. Inspired by these advancements, our methods synergize the spectral basis construction with a novel temporal splitting strategy. By explicitly exploiting the localized eigenvalues generated during the offline LSI stage, we orthogonally decompose the global multiscale space into two distinct components: an explicit subspace capturing the dominant, low-frequency macroscopic modes, and an implicit subspace governing the high-frequency microscopic corrections. This localized splitting not only bypasses the restrictive time-step constraints dictated by high-contrast coefficients but also strictly confines the expensive implicit inversions to a reduced subspace.

The remainder of this paper is organized as follows. \Cref{sec:sec2} introduces the multiscale parabolic model and its variational formulation. \Cref{sec:basis_construction} and \ref{sec:analysis} detail the offline construction of localized multiscale bases via the LSSI and LKSI methods, followed by a rigorous convergence analysis. \Cref{sec:temporalsplitting} proposes a partially explicit contrast-independent temporal splitting scheme for the online stage. Finally, \cref{sec:numresult} presents numerical experiments to demonstrate the accuracy, efficiency, and robustness of the proposed framework. Subsequently, conclusions are made in \cref{sec:conclu}.

\section{Preliminaries and problem formulation}
\label{sec:sec2}

In this section, we present the parabolic problem, introduce the necessary geometric notations for multiscale discretizations, and establish the variational formulation.

\subsection{Geometric notation}
Let $\Omega \subset \mathbb{R}^d$ ($d=2,3$) be a bounded polyhedral computational domain. We consider two grid scales: a coarse scale $H$ and a fine scale $h$, with $H \gg h$. Let $\mathcal{T}_H$ be a regular coarse partition of $\Omega$ into finite elements (e.g., triangles or rectangles), where $H$ denotes the maximum diameter of the elements in $\mathcal{T}_H$. We assume there exists a fine mesh $\mathcal{T}_h$ that is a refinement of $\mathcal{T}_H$, which is sufficiently fine to resolve the multiscale heterogeneities of the coefficients.

\subsection{Model problem}
We consider the following time-dependent parabolic equation with multiscale coefficients:
\begin{equation}
\label{eq:parabolic_strong}
\begin{aligned}
    \frac{\partial u}{\partial t} - \nabla \cdot (\kappa(x) \nabla u) &= f(x,t) \quad &&\text{in } \Omega \times (0,T], \\
    u(x,t) &= 0 \quad &&\text{on } \partial\Omega \times (0,T], \\
    u(x,0) &= u_0(x) \quad &&\text{in } \Omega,
\end{aligned}
\end{equation}
where $T > 0$ is the final time. The permeability coefficient $\kappa(x)$ is assumed to be bounded and uniformly positive, i.e., there exist constants $0 < \kappa_{\min} \le \kappa_{\max} < \infty$ such that $\kappa_{\min} \le \kappa(x) \le \kappa_{\max}$ for almost all $x \in \Omega$. We emphasize that $\kappa(x)$ may exhibit high-contrast variations and non-separable multiscale features.

\subsection{Variational formulation}
Let $V = H^1_0(\Omega)$ denote the standard Sobolev space. We define the standard $L^2(\Omega)$ inner product as $(u, v) = \int_{\Omega} u v \, dx$.
The weak formulation of \eqref{eq:parabolic_strong} reads: find $u \in L^2(0,T; V) \cap H^1(0,T; L^2(\Omega))$ such that:
\begin{equation}
\label{eq:weak_form}
\begin{aligned}
    \langle u_t, v \rangle + a(u, v) &= \langle f, v \rangle \quad &&\forall v \in V, \text{ for a.e. } t \in (0,T], \\
    \langle u(\cdot, 0), v \rangle &= \langle u_0, v \rangle  \quad &&\forall v \in V,
\end{aligned}
\end{equation}
where the bilinear form $a(\cdot, \cdot): V \times V \to \mathbb{R}$ is defined by:
\begin{equation}
    a(u, v) = \int_{\Omega} \kappa(x) \nabla u \cdot \nabla v \, dx.
\end{equation}

\subsection{Operator form and multiscale goal}
To facilitate the description of the subspace iteration method and splitting schemes, we introduce the elliptic operator $\mathcal{L}: V \to V^*$ associated with the bilinear form $a(\cdot, \cdot)$, defined by $\langle \mathcal{L} u, v \rangle = a(u, v)$. Equation \eqref{eq:weak_form} can be written in operator form as:
\begin{equation}
    \frac{\partial u}{\partial t} + \mathcal{L} u = f.
\end{equation}
Our objective is to construct a low-dimensional multiscale space $V_{\mathrm{ms}} \subset V$ such that the semidiscrete approximation $u_{ms}(t) \in V_\mathrm{ms}$ solving
\begin{equation}
\label{eq:weak_form_ms}
    \left\{
\begin{aligned}
    \langle (u_{\mathrm{ms}})_t, v \rangle + a(u_{\mathrm{ms}}, v) &= \langle f, v \rangle && \forall v \in V_{\mathrm{ms}}, \, t > 0, \\
    \langle u_{\mathrm{ms}}(\cdot, 0), v \rangle  &= \langle u_0, v \rangle && \forall v \in V_{\mathrm{ms}}.
\end{aligned}
\right.
\end{equation}
The solution $u(t)$ is accurately captured, regardless of the contrast and small scales of $\kappa(x)$. The construction of $V_{\mathrm{ms}}$ relies on the spectral properties of the operator $\mathcal{L}$ (or its localized restrictions), which we detail in the next section.

\section{Construction of Localized Multiscale Basis Functions}
\label{sec:basis_construction}

In this section, we detail the construction of the multiscale finite element space $V_{\mathrm{ms}}$. Since the permeability coefficient $\kappa(x)$ is time-independent, the basis functions are constructed in an offline stage and subsequently used for the online time-stepping procedure. This decoupling significantly reduces the computational cost of the transient simulation.

We utilize the Localized Subspace Iteration (LSI) framework introduced in \cite{Guan2026} to construct basis functions that approximate the principal eigenspace of the local operators.

\subsection{Localization and local inverse operators}
Let $\{\omega_i\}_{i=1}^{N_c}$ be an open cover of the domain $\Omega$ with a finite overlap constant $M$, i.e., for any $x \in \Omega$, $\text{card}\{i : x \in \omega_i\} \le M$. An elementary choice for the set $\{\omega_i\}_{i=1}^{N_c}$ is to extend each element $K_i$ of the finite element partition $\mathcal{T}_H$ by one or several layers. There exists a partition of unity $\{\chi_i\}_{i=1}^{N_c}$, such that
\begin{equation}
  1 = \sum_{i=1}^{N_c} \chi_i \text{ and } \text{supp}(\chi_i) = \omega_i \text{ for } i = 1, 2, \cdots, N_c.
\end{equation}

Let $V(\omega_i) = H^1_0(\omega_i)$ be the restriction of the solution space to the subdomain $\omega_i$.
We define the local operator $\mathcal{L}_i: V(\omega_i) \to V^*(\omega_i)$ associated with the restriction of the bilinear form $a(\cdot, \cdot)$ to $\omega_i$. Since $a(\cdot, \cdot)$ is coercive, the local inverse operator $\mathcal{L}_i^{-1}: L^2(\omega_i) \to V(\omega_i)$ is well-defined, compact, and self-adjoint. The spectral properties of $\mathcal{L}_i^{-1}$ characterize the local multiscale features of the medium.
Therefore, we consider the local spectral problem
\begin{equation}
\label{loc_spectral_problem}
    \mathcal{L}_i^{-1} \phi_i = \lambda_i \phi_i, \quad \phi_i \in V(\omega_i),
\end{equation}
for $i = 1, 2, \cdots, N_c$. 
Let $\{(\lambda_i^j, \phi_i^j)|j = 1, 2, 3, \cdots\}$ denote the set of all eigenpairs of the inverse operator $\mathcal{L}_i^{-1}$ in $V(\omega_i)$, where
$$\lambda_i^1 \geq \lambda_i^2  \geq \lambda_i^3 \geq \cdots \geq 0.$$
Specifically, the eigenfunctions corresponding to the largest eigenvalues of $\mathcal{L}_i^{-1}$ (or equivalently, the smallest eigenvalues of $\mathcal{L}_i$) represent the low-frequency modes that dominate the global solution behavior.

The fundamental motivation for extracting these specific eigenfunctions is theoretically grounded in the classical min-max principle (Courant-Fischer-Weyl theorem). For the local operator $\mathcal{L}_{i}$, the eigenvalues can be characterized variationally via the Rayleigh quotient. Specifically, the eigenvalues of the original differential operator $\mathcal{L}_{i}$ (which are the reciprocals of the eigenvalues $\lambda_{i}^{j}$ of the inverse operator $\mathcal{L}_{i}^{-1}$) satisfy:
$$\frac{1}{\lambda_{i}^{j}} = \min_{V_{j} \subset V(\omega_{i}), \dim(V_{j})=j} \max_{v \in V_{j} \setminus \{0\}} \frac{a(v,v)}{\langle v,v \rangle}$$
According to this principle, the subspace spanned by the first $L_{i}$ eigenfunctions of $\mathcal{L}_{i}^{-1}$ — corresponding to the largest eigenvalues $\lambda_{i}^{1} \ge \lambda_{i}^{2} \ge \dots \ge \lambda_{i}^{L_{i}}$ — constitutes the optimal $L_{i}$-dimensional space that minimizes the maximum energy relative to the $L^{2}$ norm. In the context of multiscale parabolic equations, these low-energy, low-frequency modes represent the macroscopic diffusion behaviors that decay the slowest and therefore dominate the global solution over time. However, explicitly solving the exact local eigenvalue problem \eqref{loc_spectral_problem} to extract this optimal subspace is often computationally expensive. Therefore, rather than computing the exact spectral decomposition, we seek an efficient numerical strategy to iteratively approximate this dominant eigenspace. This naturally leads to the Localized Subspace Iteration (LSI) methods.

\subsection{LSI methods}
We employ two iterative strategies to approximate the dominant eigenspace of $\mathcal{L}_i^{-1}$: the Localized Standard Subspace Iteration (LSSI) and the Localized Krylov Subspace Iteration (LKSI).

\subsubsection{Localized standard subspace iteration (LSSI)}
The LSSI method approximates the eigenspace by iterating on a subspace of standard finite element basis functions. For each subdomain $\omega_i$, let $\{\phi_{i}^{j,0}\}_{j=1}^{L_i}$ be an initial set of linearly independent functions (e.g., standard polynomial basis functions). $L_i$ is the number of basis functions in each subdomain $\omega_i$. The LSSI algorithm generates a sequence of spaces $V_{\mathrm{S},i}^{(k)}$ via the recurrence:
\begin{equation}
    V_{\mathrm{S},i}^{(k)} = \text{span} \left\{ \mathcal{L}_i^{-1} \phi \mid \phi \in V_{\mathrm{S},i}^{(k-1)} \right\}.
\end{equation}
In practice, this is implemented by solving following local saddle-point problems\cite{Guan2026}. 
For $i = 1, 2, \cdots, N_c$, $j = 1, 2, \cdots, L_i$, $k = 0, 1, 2, \cdots$, we seek $\phi_{i}^{j,k+1} \in V(\omega_i)$ and $\mu_i^{r,k+1} \in \mathbb{C}^{L_i}$, such that
\begin{equation}
\label{eq_saddle_lssi}
\begin{aligned}
    &a\left(\phi_{i}^{j,k+1}, v\right) + \sum_{r=1}^{L_i} \mu_{i}^{r, k+1} q_{i}^{r, k}(v) = 0 \quad &&\text{for all } v \in V(\omega_i), \\
    &q_{i}^{r, k}\left(\phi_{i}^{j,k+1}\right) = \delta_{rj} \quad &&\text{for all } r = 1, 2, \cdots, L_i.
\end{aligned}
\end{equation}
In the above equation, $q_i^{r, k}$ is a linear functional defined by
\begin{equation}
    q_i^{r, k} := \langle \phi_{i}^{r,k}, \bullet \rangle.
\end{equation}
The local multiscale space $V_{\mathrm{S},i}^{(k)}$ is constructed by
\begin{equation}
    V_{\mathrm{S},i}^{(k)} := \operatorname{span}\{\phi_{i}^{j,k} \mid j = 1, 2, \cdots L_i\},
\end{equation}
for $i = 1, 2, \cdots, N_c, k = 0, 1, 2, \cdots$. 

While the LSSI method systematically extracts the dominant eigenspace by iterating on a fixed-dimensional subspace, its convergence rate is strictly governed by the spectral gap between the target and neglected eigenvalues. To further enhance the approximation quality and accelerate the capture of multiscale features, we introduce the Localized Krylov Subspace Iteration (LKSI).

\subsubsection{Localized Krylov subspace iteration (LKSI)}
The LKSI method constructs the basis using the Krylov subspace generated by the local inverse operator. Given an initial function $\psi_i^{0} \in V(\omega_i)$ (typically a constant or linear function on $K_i$), the local Krylov space of dimension $k$ is defined as:
\begin{equation}
\mathcal{K}_k(\mathcal{L}_i^{-1}, \psi_i^{0}) = \text{span} \left\{ \psi_i^{0}, \mathcal{L}_i^{-1}\psi_i^{0}, \dots, \mathcal{L}_i^{-(k-1)}\psi_i^{0} \right\}.
\end{equation}
Similarly, this is implemented by solving following local saddle-point problems\cite{Guan2026}.
For $i = 1, 2, \cdots, N_c, k = 0, 1, \cdots$, we seek $\psi_i^{k+1}$ and $\mu_i^{k+1} \in \mathbb{C}$, such that
\begin{equation}
\label{eq_saddle_lksi}
\begin{aligned}
    &a(\psi_i^{k+1}, v) + \mu_i^{k+1} q_i^k(v) = 0 \quad &&\text{for all } v \in V(\omega_i), \\
    &q_i^k(\psi_i^{k+1}) = 1.
\end{aligned}
\end{equation}
Similarly, the definition of $q_i^k$ is as follows
\begin{equation}
    q_i^k := \langle \psi_i^k, \bullet \rangle .
\end{equation}
The local Krylov space $\mathcal{K}_k(\mathcal{L}_i^{-1}, \psi_i^{0})$ is constructed by
\begin{equation}
    \mathcal{K}_k(\mathcal{L}_i^{-1}, \psi_i^{0}) := \operatorname{span}\{\psi_i^s \mid s =  1, 2, \cdots k\},
\end{equation}
for $i = 1, 2, \cdots, N_c, k =  1, 2, \cdots$.

The construction of the local multiscale space $V_{\mathrm{K}, i}^{(k)}$ depends on the relationship between the dimension $k$ and the target rank $L_i$, subject to the condition $k \ge L_i$. In the case $k = L_i$, the spectral selection is bypassed, and the space is directly defined as the full Krylov subspace. However, when $k > L_i$, the space is formed by selecting $L_i$ dominant eigenfunctions from $\mathcal{K}_k(\mathcal{L}_i^{-1}, \psi_i^{0})$.

\subsection{Global multiscale space and spectral suitability}
The global multiscale space $V_{\mathrm{ms}}^{(k)}$ is constructed as the direct sum of the local spaces:
\begin{equation} \label{eq_Vms}
    V_{\mathrm{ms}}^{(k)} = \bigoplus_{i=1}^{N_c} V_{\mathrm{ms},i}^{(k)},
\end{equation}
where $V_{\mathrm{ms},i}^{(k)}$ is either $V_{\mathrm{S},i}^{(k)}$ or $V_{\mathrm{K},i}^{(k)}$.  
The suitability of $V_{\mathrm{ms}}^{(k)}$ for parabolic problems stems from the spectral decomposition of the solution error. 
The exact solution $u(x,t)$ can be decomposed into spatial eigenmodes $\phi_k$ with time-dependent coefficients $c_k e^{-\lambda_k t}$. 
The error in the multiscale solution is dominated by the projection error of the low-frequency eigenmodes (small $\lambda_k$). 
Since the LSI basis functions are explicitly constructed to span the approximate eigenspaces of the local inverse operators (which correspond to the smallest eigenvalues of the differential operator), $V_{\mathrm{ms}}^{(k)}$ naturally captures the slow-decaying components of the parabolic solution. This ensures that the method remains stable and accurate even for long-time integration, a property we rigorously analyze in the next section.

\section{Convergence analysis}
\label{sec:analysis}

In this section, we derive rigorous a priori error estimates for the proposed LSI method. We follow the standard parabolic analysis framework \cite{Li2019, Chung2016} by decomposing the error into a spatial approximation component (bounded by the spectral properties of LSI established in \cite{Guan2026}) and a temporal discretization component. We define the norms $\|v\|_a = a(v,v)^{1/2}$ and $\|v\| = \langle v,v \rangle^{1/2}$. For a time-dependent function $u(x,t)$, we denote norms in Bochner spaces as $\|u\|_{L^2(0,T; V)} = (\int_0^T \|u(\cdot, t)\|_V^2 dt)^{1/2}$.

\subsection{Preliminaries and spatial approximation}
Let $V_{\mathrm{ms}}^{(k)} \subset V$ be the multiscale finite element space constructed via the LSI method (LSSI or LKSI). We define the elliptic projection operator $\mathcal{P}: V \to V_{\mathrm{ms}}^{(k)}$ such that for any $u \in V$:
\begin{equation}
\label{eq:elliptic_proj}
    a(u - \mathcal{P}u, v) = 0, \quad \forall v \in V_{\mathrm{ms}}^{(k)}.
\end{equation}
Although \cite{Guan2026} provides an error estimate for this projection operator, the present work offers a superior estimate. As a preliminary step, we introduce the following lemma.

\begin{lemma}
\label{lem:decomposition}
For any function $u \in H^1_0(\Omega)$ satisfying $L u \in L^2(\Omega)$, there exists a decomposition $u = \sum_{i=1}^{N_c} u_i$, where each component $u_i \in H^1_0(\omega_i)$ is supported in $\omega_i$. Furthermore, the decomposition satisfies the following stability estimate:
\begin{equation}
    \sum_{i=1}^{N_c} \| L u_i \|^2 \le M \| L u \|^2.
\end{equation}
\end{lemma}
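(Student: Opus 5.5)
The plan is to build the decomposition at the level of the data $f := Lu \in L^2(\Omega)$ and then lift each piece back with the operator. The partition of unity $\{\chi_i\}$ splits the right-hand side, $f = \sum_{i=1}^{N_c} \chi_i f$. Since $\operatorname{supp}\chi_i = \omega_i$, each piece $f_i := \chi_i f$ is supported in $\omega_i$. I would then choose $u_i$ so that $L u_i = f_i$, i.e.\ $u_i := L^{-1}(\chi_i f)$. Linearity gives $\sum_i u_i = L^{-1}\bigl(\sum_i \chi_i f\bigr) = L^{-1} f = u$, so the decomposition identity $u=\sum_i u_i$ holds exactly.

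For the stability estimate I would use the pointwise bound $0 \le \chi_i \le 1$ (assumed for the partition of unity) and the finite overlap property. For a.e.\ $x$, at most $M$ of the $\chi_i(x)$ are nonzero. Hence
\begin{equation*}
\sum_{i=1}^{N_c} \|L u_i\|^2 = \int_\Omega \Bigl(\sum_{i=1}^{N_c} \chi_i^2\Bigr) f^2 \, dx \le \int_\Omega \Bigl(\sum_{i=1}^{N_c} \chi_i\Bigr) f^2\,dx = \|Lu\|^2 \le M \|Lu\|^2 .
\end{equation*}
If the $\chi_i$ are not normalized by $1$, I would fall back on the cruder bound. Cauchy--Schwarz over the at most $M$ active indices gives $\sum_i \chi_i^2 \le \bigl(\sum_i \chi_i\bigr)^2 \le 1$. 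Alternatively, replacing $\chi_i$ by indicator-type weights $\mathbf 1_{\omega_i}$ and using $\sum_i \mathbf 1_{\omega_i} \le M$ yields exactly the factor $M$, which is presumably its origin in the statement.

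The main obstacle is the support requirement $u_i \in H^1_0(\omega_i)$. The global lift $L^{-1}(\chi_i f)$ is not compactly supported in $\omega_i$; it only decays away from $\omega_i$. The local lift $\mathcal{L}_i^{-1}(\chi_i f)$ is supported in $\omega_i$ but does not sum to $u$. My first attempt to reconcile the two is to read $L u_i$ in the distributional sense for $u_i$ extended by zero. I would define $u_i$ through the local problem $\mathcal{L}_i u_i = \chi_i f$ and then redistribute the mismatch $e := u - \sum_i \mathcal{L}_i^{-1}(\chi_i f)$. This mismatch satisfies a homogeneous equation away from the subdomain boundaries, so it can be fed back through the same splitting, giving an iterative (Schwarz-type) correction. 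I would then check that the correction does not spoil the $\ell^2$ bound.

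If this redistribution cannot be controlled, the fallback is to interpret the lemma with $u_i$ supported in $\omega_i$ up to the exponentially small tails. The $L^2$-stability computation above is then unchanged. In either case the counting argument with the overlap constant $M$ is the routine part. Establishing an exact, locally supported decomposition whose sum is $u$ is the delicate step, and I expect it to require extra assumptions beyond those stated.
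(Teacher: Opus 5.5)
\paragraph{The proposal does not prove the lemma.}
You end with two constructions, and each one misses a requirement. The global lifts $L^{-1}(\chi_i f)$ violate $u_i\in H^1_0(\omega_i)$. The local lifts violate $u=\sum_i u_i$. Neither repair you suggest works.

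\emph{The Schwarz-type redistribution.} The residual $e=u-\sum_i\tilde u_i$, with $\tilde u_i$ the zero extension of $\mathcal{L}_i^{-1}(\chi_i f)$, satisfies
\[
\langle Le,v\rangle=-\sum_i\int_{\partial\omega_i\cap\Omega}\kappa\,\partial_n u_i\,v\,ds .
\]
This is a sum of interface single layers, so in general $Le\notin L^2(\Omega)$. Hence $e$ cannot be fed back through the same $\chi_i$-splitting.

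\emph{The fallback.} $L^{-1}(\chi_i f)$ is $\kappa$-harmonic in $\Omega\setminus\overline{\omega_i}$ and has no exponentially small tail. For $\kappa\equiv1$ in 1D it is affine and of order one there. Exponential decay in LOD-type theory is a property of correctors constrained to the kernel of a quasi-interpolation, not of $L^{-1}$ itself.

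\emph{Stability.} Your stability computation is fine, and even sharper than the paper's: it gives the constant $1$. But it presupposes $Lu_i=\chi_i Lu$, which is exactly what is incompatible with the support constraint.

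\paragraph{Your diagnosis is correct, and it applies to the paper's own proof.}
The paper uses the local Dirichlet lifts, extends them by zero, and asserts $L(\sum_i u_i)=\sum_i\chi_i g=g$. This drops the interface terms $\int_{\partial\omega_i\cap\Omega}\kappa\,\partial_n u_i\,v\,ds$ that appear in $a(\tilde u_i,v)$, so the appeal to uniqueness is unjustified.

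No argument can close this gap, because the lemma is false as stated. Take $\Omega=(0,1)$, $\kappa\equiv1$, $\omega_1=(0,3/4)$, $\omega_2=(1/4,1)$ (so $M=2$), and $u=x(1-x)/2$, so $Lu=1$. Suppose $u=u_1+u_2$ with $u_i\in H^1_0(\omega_i)$ and $u_i''\in L^2(\omega_i)$.
\begin{itemize}
\item On $(0,1/4)$ we have $u_2=0$, so $u_1=u$ there.
\item $u_1=u-u_2\in H^2(1/4,1)$ and $u_1$ vanishes on $(3/4,1)$.
\item Hence $(u_1,u_1')=(3/32,1/4)$ at $x=1/4$ and $(u_1,u_1')=(0,0)$ at $x=3/4$.
\end{itemize}
The cubic Hermite interpolant minimizes $\int|w''|^2$ under such endpoint data. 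With $\ell=1/2$, $\alpha=3/32$, $\beta=1/4$ this gives
\[
\sum_i\|Lu_i\|^2\ \ge\ \int_{1/4}^{3/4}|u_1''|^2\,dx\ \ge\ \frac{12\alpha^2}{\ell^3}+\frac{12\alpha\beta}{\ell^2}+\frac{4\beta^2}{\ell}=\frac{79}{32}\ >\ 2=M\|Lu\|^2 .
\]
The lower bound grows like $\ell^{-3}$ as the overlap width $\ell\to0$, so not even an overlap-independent constant can replace $M$.

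The paper's reasoning never uses the dimension. The same obstruction persists for $d=2,3$: each $u_i$ must have vanishing trace and, generically, vanishing conormal flux on $\partial\omega_i\cap\Omega$.

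So your expectation that extra assumptions are needed is right. The statement itself must be weakened. The subsequent projection estimate, which relies on $\mathcal{L}u_i=\chi_i\mathcal{L}u$, inherits the same defect.
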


\begin{proof}
Let $g = L u \in L^2(\Omega)$. We decompose the source term $g$ using the partition of unity:
\begin{equation}
    g = g \sum_{i=1}^{N_c} \chi_i = \sum_{i=1}^{N_c} \chi_i g.
\end{equation}
For each $i$, let $g_i = \chi_i g$. Since $\text{supp}(\chi_i) \subset \omega_i$, we have $\text{supp}(g_i) \subset \omega_i$. We define $u_i \in H^1_0(\omega_i)$ as the unique solution to the following local Dirichlet problem:
\begin{equation}
\label{eq_local_decom}
    \begin{cases}
        L u_i = g_i & \text{in } \omega_i, \\
        u_i = 0 & \text{on } \partial \omega_i.
    \end{cases}
\end{equation}
By extending $u_i$ to be zero outside $\omega_i$, we have $u_i \in H^1_0(\Omega)$. 

To verify the decomposition, let $v = \sum_{i=1}^{N_c} u_i$. By the linearity of the operator $L$, we have:
\begin{equation}
    L v = L \left( \sum_{i=1}^{N_c} u_i \right) = \sum_{i=1}^{N_c} L u_i = \sum_{i=1}^{N_c} \chi_i g = \left( \sum_{i=1}^{N_c} \chi_i \right) g = g.
\end{equation}
Since $L v = g = L u$ in $\Omega$ and $v = u = 0$ on $\partial \Omega$, the uniqueness of the solution implies $u = v = \sum_{i=1}^{N_c} u_i$.

Finally, we prove the stability estimate. Using the definition $L u_i = \chi_i L u$ and the property $0 \le \chi_i \le 1$, we compute:
\begin{equation}
    \sum_{i=1}^{N_c} \| L u_i \|^2 = \sum_{i=1}^{N_c} \| \chi_i L u \|^2 \le \sum_{i=1}^{N_c} \int_{\omega_i} |L u|^2 \, dx.
\end{equation}
Using the finite overlap property of the cover $\{\omega_i\}$, we arrive at:
\begin{equation}
    \sum_{i=1}^{N_c} \int_{\omega_i} |L u|^2 \, dx = \int_{\Omega} |L u|^2 \left( \sum_{i=1}^{N_c} \mathbb{I}_{\omega_i}(x) \right) \, dx \le M \| L u \|^2,
\end{equation}
where $\mathbb{I}_{\omega_i}$ is the characteristic function of the set $\omega_i$. This completes the proof.
\end{proof}

We define the local eigenfunction space $V_{\text{eig}}$ as follows:
\begin{equation}
    V_{\text{eig}} := \operatorname{span}\{ \phi_i^j \mid i = 1, 2, \cdots, N_c, j = 1, 2, \cdots, L_i \},
\end{equation}
where $\phi_i^j$ is the local eigenfunction defined by \cref{loc_spectral_problem}. For all $u \in V$, there exists a decomposition $u = \sum_{i=1}^{N_c} u_i$ constructed by \cref{eq_local_decom}.
Then we define the interpolation operator $\mathcal{I}_{\text{eig}} : V \to V_{\text{eig}}$ as follows:
\begin{equation}
    \mathcal{I}_{\mathrm{eig}}u := \sum_{i=1}^{N_c} \sum_{j=1}^{L_i} \langle u_i, \phi_i^j \rangle \phi_i^j.
\end{equation}

\begin{lemma}
Suppose $u \in V$, then we have an estimation for the interpolation error
\begin{equation} 
\label{eq_errorIeig}
    \| u - \mathcal{I}_{\mathrm{eig}} u \|_a \leq \sqrt{\lambda^{L+1}} \sum_{i=1}^{N_c} \| \mathcal{L} u_i \| \leq M \sqrt{\lambda^{L+1}}  \| L u \|,  
\end{equation}
where $\lambda^{L+1} := \max\limits_{i} \lambda_i^{L_i+1}$.
\end{lemma}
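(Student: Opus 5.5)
The plan is to reduce everything to a single patch $\omega_i$, where $\mathcal{I}_{\mathrm{eig}}$ is a truncated eigenfunction expansion, and then sum over patches. Fix $u\in V$ with $\mathcal{L}u\in L^2(\Omega)$, and take the decomposition $u=\sum_i u_i$ from \cref{lem:decomposition}, with $\mathcal{L}u_i=\chi_i\mathcal{L}u$. Set $e_i:=u_i-\sum_{j\le L_i}\langle u_i,\phi_i^j\rangle\phi_i^j$. Then $u-\mathcal{I}_{\mathrm{eig}}u=\sum_i e_i$, with each $e_i\in H^1_0(\omega_i)$ extended by zero.

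\textbf{Local estimate.} Normalize the eigenfunctions of \eqref{loc_spectral_problem} in $L^2(\omega_i)$. They form an $L^2(\omega_i)$-orthonormal basis that is also $a$-orthogonal, with $a(\phi_i^j,\phi_i^j)=1/\lambda_i^j$. Write $u_i=\sum_j c_j\phi_i^j$. Because $u_i$ solves the local Dirichlet problem \eqref{eq_local_decom}, we have $\mathcal{L}_i u_i=\mathcal{L}u_i|_{\omega_i}=\sum_j (c_j/\lambda_i^j)\phi_i^j$. Using $\lambda_i^j\le\lambda_i^{L_i+1}$ for $j>L_i$,
\begin{equation*}
\|e_i\|_a^2=\sum_{j>L_i}\frac{c_j^2}{\lambda_i^j}=\sum_{j>L_i}\lambda_i^j\Big(\frac{c_j}{\lambda_i^j}\Big)^2\le \lambda_i^{L_i+1}\|\mathcal{L}u_i\|^2\le\lambda^{L+1}\|\mathcal{L}u_i\|^2 .
\end{equation*}
The triangle inequality in the $a$-norm then gives the first inequality of \eqref{eq_errorIeig}:
\begin{equation*}
\|u-\mathcal{I}_{\mathrm{eig}}u\|_a\le\sum_i\|e_i\|_a\le\sqrt{\lambda^{L+1}}\sum_i\|\mathcal{L}u_i\| .
\end{equation*}
The only care needed here is to justify the spectral calculus. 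The series for $\mathcal{L}_iu_i$ converges in $L^2(\omega_i)$ because $\mathcal{L}u_i=\chi_i\mathcal{L}u\in L^2$, and the energy identity follows from $a(\phi_i^j,\phi_i^l)=\delta_{jl}/\lambda_i^j$.

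\textbf{Second inequality: the main obstacle.} We need $\sum_i\|\mathcal{L}u_i\|\le M\|\mathcal{L}u\|$. From $\mathcal{L}u_i=\chi_i\mathcal{L}u$ and $0\le\chi_i\le1$ we get $\|\mathcal{L}u_i\|\le\|\mathcal{L}u\|_{L^2(\omega_i)}$. I would first try to combine this with the finite-overlap property, as in the proof of \cref{lem:decomposition}. That argument, however, controls only the $\ell^2$ sum: $\sum_i\|\mathcal{L}u_i\|^2\le M\|\mathcal{L}u\|^2$. Passing from there to the $\ell^1$ sum by Cauchy--Schwarz gives $\sum_i\|\mathcal{L}u_i\|\le\sqrt{N_cM}\,\|\mathcal{L}u\|$, which depends on $N_c$. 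A constant $\mathcal{L}u$ with many patches shows the $\ell^1$ sum genuinely grows like $N_c^{1/2}\|\mathcal{L}u\|$. So I do not expect the middle inequality to hold with constant $M$ as literally stated. I would record it with the constant $\sqrt{N_cM}$, and check whether the paper's later use needs only the endpoint bound.

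\textbf{Recovering the endpoint with constant $M$.} The outer bound $\|u-\mathcal{I}_{\mathrm{eig}}u\|_a\le M\sqrt{\lambda^{L+1}}\|\mathcal{L}u\|$ can still be obtained without the $\ell^1$ sum. Replace the triangle inequality by the pointwise overlap bound $|\sum_i\kappa^{1/2}\nabla e_i|^2\le M\sum_i|\kappa^{1/2}\nabla e_i|^2$, which holds because at most $M$ of the $e_i$ are nonzero at any point. This gives $\|\sum_ie_i\|_a^2\le M\lambda^{L+1}\sum_i\|\mathcal{L}u_i\|^2$. Combining with \cref{lem:decomposition} yields $\|u-\mathcal{I}_{\mathrm{eig}}u\|_a\le M\sqrt{\lambda^{L+1}}\|\mathcal{L}u\|$.
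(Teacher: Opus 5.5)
Your patchwise computation is right: for the local Dirichlet solution $u_i$ the eigen-expansion gives $\|e_i\|_a^2\le\lambda_i^{L_i+1}\|\chi_i\mathcal{L}u\|^2$. You are also right that the middle inequality cannot hold with constant $M$, because $\sum_i\|\chi_i\mathcal{L}u\|$ is an $\ell^1$ sum, as your example $\mathcal{L}u\equiv1$ shows. The paper omits its own proof and defers to the elliptic LSI paper, but the form of the statement shows the intended route is yours, built on \cref{lem:decomposition}. So the paper's argument contains the same $\ell^1$/$\ell^2$ slip you caught.

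The genuine gap, which you inherit from that lemma, is the identity $u-\mathcal{I}_{\mathrm{eig}}u=\sum_i e_i$, i.e.\ $u=\sum_i u_i$. This identity is false. The zero extension of a local Dirichlet solution does not satisfy $\mathcal{L}u_i=\chi_i\mathcal{L}u$ on $\Omega$. For $v\in C_c^\infty(\Omega)$ one has $a(u_i,v)=\langle\chi_i\mathcal{L}u,v\rangle+\int_{\partial\omega_i\cap\Omega}\kappa\,\partial_n u_i\,v\,ds$, and the proof of \cref{lem:decomposition} silently drops these single layers when it writes $\mathcal{L}\sum_i u_i=\sum_i\chi_i\mathcal{L}u$. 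The layers do not cancel in the sum. Take $\kappa\equiv1$ and $\mathcal{L}u\equiv1$. Then $u_i>0$ in $\omega_i$, so $\partial_n u_i\le0$ on $\partial\omega_i$, strictly on its flat parts by Hopf's lemma. All layers are therefore nonpositive measures, so $\mathcal{L}\sum_i u_i\neq\mathcal{L}u$. In 1D this is a visible kink of $u_1+u_2$ at the end of a patch.

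Hence your argument only bounds $\|\sum_i u_i-\mathcal{I}_{\mathrm{eig}}u\|_a$. The statement itself fails, already in its first inequality. Fix the cover and let all $L_i\to\infty$. Your local estimate gives $\mathcal{I}_{\mathrm{eig}}u\to\sum_i u_i$ in the energy norm and $\lambda^{L+1}\to0$. So both right-hand sides tend to $0$, while the left-hand side tends to $\|u-\sum_i u_i\|_a>0$, a quantity independent of $L_i$.

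Nor can a cleverer choice of $u_i$ rescue \cref{lem:decomposition} on patches of diameter $O(H)$. Suppose $u=\sum_i u_i$ with $u_i\in H^1_0(\omega_i)$ and $\sum_i\|\mathcal{L}_i u_i\|^2\le M\|\mathcal{L}u\|^2$. Your pointwise-overlap bound together with $\|u_i\|\le\lambda_i^1\|\mathcal{L}_i u_i\|$ gives $\|u\|\le M\max_i\lambda_i^1\,\|\mathcal{L}u\|$, where $\lambda_i^1\lesssim\operatorname{diam}(\omega_i)^2/\kappa_{\min}$. Once $H$ is small, this is false for the first Dirichlet eigenfunction of $\mathcal{L}$ on $\Omega$.

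A correct estimate needs an interpolant that reproduces $u$ exactly before truncation. One example is partition-of-unity gluing of local approximations of $u$ itself, as in MS-GFEM. The resulting bound will have a different form and different constants.
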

\begin{proof}
    The proof of this lemma is analogous to that of Theorem 2 in \cite{Guan2026} and is therefore omitted.
\end{proof}

Before the convergence result of the LSI is obtained, we first present the following theorems.

\begin{theorem}
\label{lemma:lssi_convergence}
Let $\phi_{i}^{j}$ be the $j$-th eigenfunction of the local spectral problem \cref{loc_spectral_problem} associated with the eigenvalue $\lambda_{i}^{j}$. Assume that the initial local space $V_{S,i}^{(0)} $ is chosen such that its projection onto the dominant eigenspace $\text{span}\{\phi_{i}^{1}, \dots, \phi_{i}^{L_i}\}$ is of full rank. Then, for the local multiscale space $V_{S,i}^{(k)}$ constructed via the LSSI method after $k$ iterations, there exists a function $\hat{\phi}_{i}^{j,k} \in V_{S,i}^{(k)}$ such that the following estimate holds:
\begin{equation} \label{eq:lssi_bound}
    ||\hat{\phi}_{i}^{j,k} - \phi_{i}^{j}||_{a} \le C \left( \frac{\lambda_{i}^{L_{i}+1}}{\lambda_{i}^{j}}  \right)^k, \quad \text{for } j = 1, 2, \dots, L_i,
\end{equation}
where $C$ is a constant independent of $k$, and $\epsilon_{k} \rightarrow 0$ as $k \rightarrow \infty$.
\end{theorem}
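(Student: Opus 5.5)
This is the classical subspace-iteration convergence argument, transplanted to the compact self-adjoint operator $\mathcal{L}_i^{-1}$ on $L^2(\omega_i)$. The error will be measured in the energy norm on $V(\omega_i)$.

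\textbf{Step 1: spectral setup.} By the saddle-point formulation \cref{eq_saddle_lssi}, each $\phi_i^{j,k+1}$ is a linear combination of $\mathcal{L}_i^{-1}\phi_i^{r,k}$, $r=1,\dots,L_i$. So $V_{S,i}^{(k)}=\mathcal{L}_i^{-k}V_{S,i}^{(0)}$, and the dimension stays $L_i$ because $\mathcal{L}_i^{-1}$ is injective. Normalize the eigenfunctions $\phi_i^m$ to be $L^2(\omega_i)$-orthonormal. They are then also $a$-orthogonal, with $\|\phi_i^m\|_a^2=1/\lambda_i^m$. Expand any $v\in V_{S,i}^{(0)}$ as $v=\sum_m \langle v,\phi_i^m\rangle\phi_i^m$.

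\textbf{Step 2: choice of $\hat\phi$.} The projection $P$ onto $E=\operatorname{span}\{\phi_i^1,\dots,\phi_i^{L_i}\}$ restricted to $V_{S,i}^{(0)}$ is of full rank by hypothesis. Hence it is a bijection onto $E$, and there is a unique $v_j\in V_{S,i}^{(0)}$ with $Pv_j=\phi_i^j$. Write $v_j=\phi_i^j+\sum_{m>L_i}c_m\phi_i^m$. Set
\[
\hat\phi_i^{j,k}:=(\lambda_i^j)^{-k}\mathcal{L}_i^{-k}v_j=\phi_i^j+\sum_{m>L_i}c_m\Big(\frac{\lambda_i^m}{\lambda_i^j}\Big)^k\phi_i^m\in V_{S,i}^{(k)} .
\]

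\textbf{Step 3: energy estimate.} By $a$-orthogonality,
\[
\|\hat\phi_i^{j,k}-\phi_i^j\|_a^2=\sum_{m>L_i}c_m^2\frac{(\lambda_i^m)^{2k}}{(\lambda_i^j)^{2k}\lambda_i^m}\le\Big(\frac{\lambda_i^{L_i+1}}{\lambda_i^j}\Big)^{2k}\sum_{m>L_i}\frac{c_m^2}{\lambda_i^m},
\]
using $\lambda_i^m\le\lambda_i^{L_i+1}$ for $m>L_i$. The last sum equals $\|(I-P)v_j\|_a^2$. This is finite provided $V_{S,i}^{(0)}\subset V(\omega_i)$, which holds for the finite element initial functions. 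Taking $C=\max_j\|(I-P)v_j\|_a$ gives \cref{eq:lssi_bound}, and $C$ is independent of $k$. The same decay factor also yields the quantity $\epsilon_k\to0$ mentioned in the statement.

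\textbf{Main obstacle.} The delicate point is justifying the energy-norm expansion. This requires $v_j\in H^1_0(\omega_i)$ with $\sum_m c_m^2/\lambda_i^m<\infty$, which is exactly membership in $V(\omega_i)$ via the spectral characterization of the energy norm. If only $L^2$ initial data were allowed, I would first apply one iteration, so that $\mathcal{L}_i^{-1}v_j\in V(\omega_i)$, and absorb the resulting shift of one power into $C$.

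A secondary point is that the constant depends on how well-conditioned $P|_{V_{S,i}^{(0)}}$ is. I would not try to bound this quantitatively; I would only note that it is fixed once the initial space is chosen.
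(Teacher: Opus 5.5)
Your proposal is correct and follows essentially the same argument as the paper. Both pick the unique element of $V_{S,i}^{(0)}$ whose dominant-space projection is $\phi_i^j$, scale its $k$-th iterate by $(\lambda_i^j)^{-k}$, and bound the remaining tail by $(\lambda_i^{L_i+1}/\lambda_i^j)^k\|w\|_a$; the paper then takes $\hat{\phi}_i^{j,k}$ to be the elliptic projection of $\phi_i^j$ onto $V_{S,i}^{(k)}$ (which can only do better), while you use the scaled iterate itself, which is enough for the existence claim, and your caveat about initial functions lying only in $L^2$ (e.g.\ shape functions restricted to $K_i$) handles a point the paper leaves implicit, since its constant $C=\|w\|_a$ requires $w\in V(\omega_i)$.
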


\begin{proof}
Let $\mathcal{P}_{dom}$ denote the $a$-orthogonal projector onto the dominant eigenspace $W_{dom} = \text{span}\{\phi_{i}^{1}, \dots, \phi_{i}^{L_i}\}$ of the local inverse operator $\mathcal{L}_{i}^{-1}$. By the assumption that the initial space $V_{S,i}^{(0)}$ has a full-rank projection onto $W_{dom}$, the functions $\{\mathcal{P}_{dom}\phi_{i}^{m,0}\}_{m=1}^{L_i}$ form a basis for $W_{dom}$. Consequently, there exists a unique function $s^{j} \in V_{S,i}^{(0)}$ such that $\mathcal{P}_{dom} s^{j} = \phi_{i}^{j}$.

We can decompose $s^{j}$ as:
\begin{equation}
    s^{j} = \phi_{i}^{j} + w,
\end{equation}
where $w = (\mathcal{I} - \mathcal{P}_{dom})s^{j}$. Clearly, $w$ belongs to the complementary invariant subspace $W_{sub}$ associated with the remaining eigenvalues $\lambda_{i}^{L_i+1}, \lambda_{i}^{L_i+2}, \dots$.

By the definition of the LSSI method, the space after $k$ iterations is $V_{S,i}^{(k)} = \text{span}\{(\mathcal{L}_{i}^{-1})^k v \mid v \in V_{S,i}^{(0)}\}$. We define an auxiliary function $y \in V_{S,i}^{(k)}$ by scaling the $k$-th iteration of $s^{j}$:
\begin{equation}
    y = \left(\frac{1}{\lambda_{i}^{j}}\right)^k (\mathcal{L}_{i}^{-1})^k s^{j}.
\end{equation}
Substituting the decomposition of $s^{j}$ into the above definition, and using the fact that $\mathcal{L}_{i}^{-1} \phi_{i}^{j} = \lambda_{i}^{j} \phi_{i}^{j}$, we obtain:
\begin{equation}
    y = \left(\frac{1}{\lambda_{i}^{j}}\right)^k (\mathcal{L}_{i}^{-1})^k (\phi_{i}^{j} + w) = \phi_{i}^{j} + \left(\frac{1}{\lambda_{i}^{j}}\right)^k (\mathcal{L}_{i}^{-1})^k w.
\end{equation}
Rearranging the terms yields the error equation for the auxiliary function $y$:
\begin{equation}
    y - \phi_{i}^{j} = \left[ \frac{1}{\lambda_{i}^{j}} \mathcal{L}_{i}^{-1} \Big|_{W_{sub}} \right]^k w,
\end{equation}
where $\mathcal{L}_{i}^{-1} \big|_{W_{sub}}$ denotes the restriction of the inverse operator to the subspace $W_{sub}$. 

Taking the energy norm $||\cdot||_{a}$ on both sides, we have:
\begin{equation}
    ||y - \phi_{i}^{j}||_{a} \le \left|\left| \left[ \frac{1}{\lambda_{i}^{j}} \mathcal{L}_{i}^{-1} \Big|_{W_{sub}} \right]^k \right|\right|_{a} ||w||_{a}.
\end{equation}
Since the maximum eigenvalue of the restricted operator $\mathcal{L}_{i}^{-1} \big|_{W_{sub}}$ is exactly $\lambda_{i}^{L_i+1}$, the spectral radius of the normalized operator is $\lambda_{i}^{L_i+1} / \lambda_{i}^{j}$. Utilizing the classical spectral radius bound for linear operators, we obtain:
\begin{equation} \label{eq:lssi_spectral_bound}
    ||y - \phi_{i}^{j}||_{a} \le ||w||_{a} \left( \frac{\lambda_{i}^{L_{i}+1}}{\lambda_{i}^{j}} \right)^k.
\end{equation}

Let $\mathcal{P}^{(k)}$ be the elliptic projector onto $V_{S,i}^{(k)}$, we define $\hat{\phi}_{i}^{j,k} = \mathcal{P}^{(k)} \phi_{i}^{j}$. Due to the optimality of the projection, the error is bounded by any function in the subspace:
\begin{equation}
    ||\hat{\phi}_{i}^{j,k} - \phi_{i}^{j}||_{a} = \min_{v \in V_{S,i}^{(k)}} ||v - \phi_{i}^{j}||_{a} \le ||y - \phi_{i}^{j}||_{a}.
\end{equation}
Combining this with \eqref{eq:lssi_spectral_bound} and defining $C = ||w||_{a}$, we arrive at the final estimate:
\begin{equation}
    ||\hat{\phi}_{i}^{j,k} - \phi_{i}^{j}||_{a} \le C \left( \frac{\lambda_{i}^{L_{i}+1}}{\lambda_{i}^{j}} \right)^k.
\end{equation}
This completes the proof.
\end{proof}

\begin{theorem}
\label{lemma:lksi_convergence}
Let $\phi_{i}^{j}$ be the $j$-th eigenfunction of the local spectral problem \cref{loc_spectral_problem} associated with the eigenvalue. Assume the initial function $\psi_i^0$ has a nonzero component along $\phi_i^j$. For the local multiscale space $V_{K,i}^{(k)} = \mathcal{K}_k(\mathcal{L}_i^{-1}, \psi_i^0)$ constructed via the LKSI method with dimension $k > j$, there exists a function $\hat{\phi}_{i}^{j,k} \in V_{K,i}^{(k)}$ such that the following estimate holds:
\begin{equation} \label{eq:lksi_bound}
    ||\hat{\phi}_{i}^{j,k} - \phi_{i}^{j}||_{a} \le C \left( \frac{1}{1+4\gamma_i^j} \right)^k,
\end{equation}
where $C$ is a constant independent of $k$, and $\gamma_i^j$ is the relative spectral gap defined as:
\begin{equation} \label{eq:spectral_gap}
    \gamma_i^j = \frac{\lambda_i^j - \lambda_i^{j+1}}{\lambda_i^{j+1}}.
\end{equation}
\end{theorem}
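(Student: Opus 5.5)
We follow the classical Kaniel--Paige--Saad analysis of Krylov methods, adapted to the energy inner product. Every element of $V_{K,i}^{(k)}=\mathcal{K}_k(\mathcal{L}_i^{-1},\psi_i^0)$ has the form $p(\mathcal{L}_i^{-1})\psi_i^0$ with $p$ a polynomial of degree at most $k-1$. So it suffices to choose one good polynomial and then use the optimality of the elliptic projection $\mathcal{P}^{(k)}$ onto $V_{K,i}^{(k)}$, exactly as in the proof of \cref{lemma:lssi_convergence}. We set $\hat{\phi}_i^{j,k}=\mathcal{P}^{(k)}\phi_i^j$ (suitably scaled), and then
\[
\|\hat{\phi}_i^{j,k}-\phi_i^j\|_a\le \|p(\mathcal{L}_i^{-1})\psi_i^0-\phi_i^j\|_a
\]
for any admissible $p$.

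\textbf{Step 1: expansion in eigenfunctions.} We expand $\psi_i^0=\sum_m \alpha_m\phi_i^m$. The eigenfunctions are $a$-orthogonal, since $a(\phi_i^m,\phi_i^n)=(\lambda_i^m)^{-1}\langle\phi_i^m,\phi_i^n\rangle$. We normalize so that $\alpha_j=1$, which is possible because $\alpha_j\neq0$ by assumption. For any $p$ with $p(\lambda_i^j)=1$ we then get
\[
\|p(\mathcal{L}_i^{-1})\psi_i^0-\phi_i^j\|_a^2=\sum_{m\neq j}|p(\lambda_i^m)|^2|\alpha_m|^2\|\phi_i^m\|_a^2 .
\]
For this to make sense we need $\psi_i^0$ to have finite energy, i.e. $\psi_i^0\in V(\omega_i)$, and we implicitly work in the fine-scale discrete setting where the spectrum is finite. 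The task therefore reduces to bounding $\max_{m\ne j}|p(\lambda_i^m)|$.

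\textbf{Step 2: choice of polynomial.} We take
\[
p(x)=\prod_{m<j}\frac{x-\lambda_i^m}{\lambda_i^j-\lambda_i^m}\cdot\frac{T_{k-j}\big(\ell(x)\big)}{T_{k-j}\big(\ell(\lambda_i^j)\big)},
\]
where $T_n$ is the Chebyshev polynomial and $\ell(x)=2x/\lambda_i^{j+1}-1$ maps $[0,\lambda_i^{j+1}]$ onto $[-1,1]$. The degree is $k-1$, so the product annihilates the components $m<j$ and $p(\lambda_i^j)=1$. For $m>j$ we have $\lambda_i^m\in[0,\lambda_i^{j+1}]$, so $|T_{k-j}(\ell(\lambda_i^m))|\le1$. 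The prefactor is bounded by a constant $K_j=\prod_{m<j}\lambda_i^m/(\lambda_i^m-\lambda_i^j)$, which requires $\lambda_i^j$ to be simple; we add this as a mild assumption. This yields
\[
\|p(\mathcal{L}_i^{-1})\psi_i^0-\phi_i^j\|_a\le \frac{K_j\|\psi_i^0-\phi_i^j\|_a}{T_{k-j}(1+2\gamma_i^j)},
\]
using $\ell(\lambda_i^j)=2\lambda_i^j/\lambda_i^{j+1}-1=1+2\gamma_i^j$.

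\textbf{Step 3: Chebyshev growth.} This is the main obstacle, because the stated rate $(1+4\gamma_i^j)^{-k}$ has to be extracted from the Chebyshev bound. For $x>1$,
\[
T_n(x)\ge\tfrac12\big(x+\sqrt{x^2-1}\big)^n .
\]
With $x=1+2\gamma$ we get $x+\sqrt{x^2-1}=1+2\gamma+2\sqrt{\gamma+\gamma^2}$. We must compare this with $1+4\gamma$, i.e. check $\sqrt{\gamma+\gamma^2}\ge\gamma$, which holds. Hence $T_{k-j}(1+2\gamma_i^j)\ge\frac12(1+4\gamma_i^j)^{k-j}$. Absorbing $2K_j(1+4\gamma_i^j)^{j}\|\psi_i^0-\phi_i^j\|_a$ into $C$, which is independent of $k$, gives \eqref{eq:lksi_bound}.

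The delicate points are therefore the constant bookkeeping and the simplicity assumption on $\lambda_i^j$. If the intended statement instead counts $k$ as the degree index, the exponent shifts by one, and we absorb that into $C$ as well.
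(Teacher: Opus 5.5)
Your proof is correct and follows the paper's argument essentially step for step. Both use Galerkin optimality in $\|\cdot\|_a$, a degree-$(k-1)$ filter polynomial that vanishes at $\lambda_i^1,\dots,\lambda_i^{j-1}$ multiplied by a shifted Chebyshev polynomial of degree $k-j$ on $[0,\lambda_i^{j+1}]$ normalized at $1+2\gamma_i^j$, the lower bound $\frac12\bigl(x+\sqrt{x^2-1}\bigr)^{k-j}\ge\frac12(1+4\gamma_i^j)^{k-j}$, and absorption of $(1+4\gamma_i^j)^{j}$ into $C$. Your normalization $\alpha_j=1$, the explicit prefactor bound $K_j$, and the extra hypothesis $\lambda_i^m\neq\lambda_i^j$ for $m<j$ make precise what the paper uses only implicitly (its proof also divides by $\lambda_i^j-\lambda_i^m$), and that hypothesis is genuinely needed: every element of a single-vector Krylov space has, in each eigenspace, a component that is a multiple of the corresponding component of $\psi_i^0$.
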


\begin{proof}
By the definition of the Krylov subspace, any function $v \in V_{K,i}^{(k)}$ can be expressed as $v = p(\mathcal{L}_i^{-1})\psi_i^0$, where $p \in \mathbb{P}_{k-1}$ is a polynomial of degree at most $k-1$. Furthermore, since the local inverse operator $\mathcal{L}_i^{-1}$ is self-adjoint with respect to the energy inner product $a(\cdot, \cdot)$, it possesses a complete set of $a$-orthogonal eigenfunctions $\{\phi_i^m\}_{m=1}^{\infty}$ with real eigenvalues. This allows us to express the initial vector as $\psi_i^0 = \sum_{m=1}^{\infty} c_m \phi_i^m$.

The function $\hat{\phi}_{i}^{j,k}$ is the Galerkin projection of the target eigenfunction onto $V_{K,i}^{(k)}$. Due to the optimality of this projection in the energy norm, its error is bounded by the approximation error of any specific polynomial $p^* \in \mathbb{P}_{k-1}$ that satisfies $p^*(\lambda_i^j) = 1$:
\begin{equation} \label{eq:min_max}
    ||\hat{\phi}_{i}^{j,k} - \phi_{i}^{j}||_{a}^2 \le ||p^*(\mathcal{L}_i^{-1})\psi_i^0 - \phi_{i}^{j}||_{a}^2 = \sum_{m \neq j} c_m^2 |p^*(\lambda_i^m)|^2 ||\phi_i^m||_a^2.
\end{equation}

To minimize this upper bound, we explicitly construct a "filter" polynomial $p^*(\lambda)$. We allocate $j-1$ roots of $p^*$ to exactly cancel the components corresponding to the larger eigenvalues $\lambda_i^1, \dots, \lambda_i^{j-1}$. The remaining $k-j$ degrees of freedom are used to construct a polynomial $q(\lambda) \in \mathbb{P}_{k-j}$ that suppresses the unwanted spectrum $[0, \lambda_i^{j+1}]$:
\begin{equation}
    p^*(\lambda) = \left( \prod_{m=1}^{j-1} \frac{\lambda - \lambda_i^m}{\lambda_i^j - \lambda_i^m} \right) q(\lambda), \quad \text{with } q(\lambda_i^j) = 1.
\end{equation}

To obtain the optimal suppression on $[0, \lambda_i^{j+1}]$, we define $q(\lambda)$ using the shifted Chebyshev polynomial of the first kind. We apply the linear transformation $x(\lambda) = (2\lambda - \lambda_i^{j+1})/\lambda_i^{j+1}$, which maps the interval $[0, \lambda_i^{j+1}]$ onto $[-1, 1]$. Evaluating this transformation at the target eigenvalue $\lambda_i^j$ gives:
\begin{equation}
    x(\lambda_i^j) = \frac{2\lambda_i^j - \lambda_i^{j+1}}{\lambda_i^{j+1}} = 1 + 2\left(\frac{\lambda_i^j - \lambda_i^{j+1}}{\lambda_i^{j+1}}\right) = 1 + 2\gamma_i^j.
\end{equation}
Thus, we define the polynomial $q(\lambda)$ as:
\begin{equation}
    q(\lambda) = \frac{C_{k-j}\left(\frac{2\lambda - \lambda_i^{j+1}}{\lambda_i^{j+1}}\right)}{C_{k-j}(1 + 2\gamma_i^j)}.
\end{equation}
For any unwanted high-frequency eigenvalue $\lambda_i^m \in [0, \lambda_i^{j+1}]$ (where $m > j$), the numerator is bounded by $|C_{k-j}(x)| \le 1$. Therefore, the amplitude is attenuated by the denominator $C_{k-j}(1 + 2\gamma_i^j)$.

For $x > 1$, the Chebyshev polynomial satisfies the standard algebraic lower bound $C_m(x) \ge \frac{1}{2}(x + \sqrt{x^2-1})^m$. Substituting $x = 1 + 2\gamma_i^j$, the base of the exponential term becomes:
\begin{equation}
    x + \sqrt{x^2-1} = 1 + 2\gamma_i^j + \sqrt{(1+2\gamma_i^j)^2 - 1} = 1 + 2\gamma_i^j + 2\sqrt{(\gamma_i^j)^2 + \gamma_i^j}.
\end{equation}
Since $\gamma_i^j > 0$, we have the strict inequality $\sqrt{(\gamma_i^j)^2 + \gamma_i^j} > \sqrt{(\gamma_i^j)^2} = \gamma_i^j$. Consequently,
\begin{equation}
    x + \sqrt{x^2-1} > 1 + 2\gamma_i^j + 2\gamma_i^j = 1 + 4\gamma_i^j.
\end{equation}
This yields the lower bound for the denominator:
\begin{equation}
    C_{k-j}(1 + 2\gamma_i^j) > \frac{1}{2} (1 + 4\gamma_i^j)^{k-j}.
\end{equation}

Substituting this attenuation factor back into \eqref{eq:min_max}, and observing that $p^*(\lambda_i^m) = 0$ for $m < j$, the error is dominated by the components where $m > j$. Absorbing the initial coefficients $c_m$, the normalization factors for the first $j-1$ roots, and the scalar shift $(1+4\gamma_i^j)^{-j}$ into a generic constant $C$ that is independent of $k$, we arrive at the final estimate:
\begin{equation}
    ||\hat{\phi}_{i}^{j,k} - \phi_{i}^{j}||_{a} \le C \left( \frac{1}{1+4\gamma_i^j} \right)^k.
\end{equation}
This completes the proof.
\end{proof}

For the sake of notational simplicity in the subsequent analysis, we denote the convergence factor in inequalities \cref{eq:lssi_bound} and \cref{eq:lksi_bound} as $\varepsilon_i^j$. Specifically, we define $\varepsilon_i^j = \frac{\lambda_i^{L_i+1}}{\lambda_i^j}$ for the LSSI method and $\varepsilon_i^j = \frac{1}{1+4\gamma_i^j}$ for the LKSI method, respectively.

Based on the auxiliary function $\hat{\phi}_i^{j,k}$ in $V_{\mathrm{ms},i}^{(k)}$, we define the interpolation operator $\mathcal{I}_\mathrm{ms}: V \to V_{\mathrm{ms}}^{(k)}$,
\begin{equation}
   \mathcal{I}_\mathrm{ms} u := \sum_{i=1}^{N_c} \sum_{j=1}^{L_i} \left\langle u_i, \phi_i^j \right\rangle \hat{\phi}_i^j. 
\end{equation}

\begin{theorem}
\label{theorem:proj}
    For any $u \in V$, the elliptic projection operator $\mathcal{P}: V \to V_{\mathrm{ms}}^{(k)}$ is defined by \cref{eq:elliptic_proj}. The following convergence result holds:
    \begin{equation}
        \| u - \mathcal{P} u \|_a \leq  \left( M \sqrt{\lambda^{L+1}} + CM \mathcal{E}^k H^2 \right) \| \mathcal{L} u\|,
    \end{equation}
    where $\mathcal{E}:= \max\limits_{i,j} \varepsilon_i^j$.
\end{theorem}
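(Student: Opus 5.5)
By Céa-type optimality of the elliptic projection \eqref{eq:elliptic_proj}, $\|u-\mathcal{P}u\|_a \le \|u - \mathcal{I}_{\mathrm{ms}} u\|_a$, since $\mathcal{I}_{\mathrm{ms}}u \in V_{\mathrm{ms}}^{(k)}$. I would then split with the eigenfunction interpolant as intermediary:
\begin{equation*}
\|u - \mathcal{I}_{\mathrm{ms}} u\|_a \le \|u - \mathcal{I}_{\mathrm{eig}} u\|_a + \|\mathcal{I}_{\mathrm{eig}} u - \mathcal{I}_{\mathrm{ms}} u\|_a .
\end{equation*}
The first term is bounded directly by \eqref{eq_errorIeig}, giving $M\sqrt{\lambda^{L+1}}\,\|\mathcal{L}u\|$. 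Here $u=\sum_i u_i$ is the decomposition of \cref{lem:decomposition}.

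For the second term, by definition
\begin{equation*}
\mathcal{I}_{\mathrm{eig}} u - \mathcal{I}_{\mathrm{ms}} u = \sum_{i=1}^{N_c}\sum_{j=1}^{L_i} \langle u_i,\phi_i^j\rangle\,(\phi_i^j - \hat{\phi}_i^{j,k}).
\end{equation*}
I would use the triangle inequality together with \cref{lemma:lssi_convergence} or \cref{lemma:lksi_convergence}, which give $\|\phi_i^j - \hat\phi_i^{j,k}\|_a \le C(\varepsilon_i^j)^k \le C\mathcal{E}^k$. This yields
\begin{equation*}
\|\mathcal{I}_{\mathrm{eig}} u - \mathcal{I}_{\mathrm{ms}} u\|_a \le C\mathcal{E}^k \sum_{i}\sum_{j\le L_i} |\langle u_i,\phi_i^j\rangle| .
\end{equation*}
Alternatively, using the finite overlap $M$ to localize the energy norm gives a bound with $\big(\sum_i \|\cdot\|_{a,\omega_i}^2\big)^{1/2}$, which is preferable.

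The key step is to extract $H^2\|\mathcal{L}u_i\|$ from the coefficients. Since $\mathcal{L}_i$ is self-adjoint with $\mathcal{L}_i\phi_i^j = (\lambda_i^j)^{-1}\phi_i^j$, I would write
\begin{equation*}
\langle u_i,\phi_i^j\rangle = \lambda_i^j\langle \mathcal{L}_i u_i,\phi_i^j\rangle ,
\end{equation*}
so that $|\langle u_i,\phi_i^j\rangle| \le \lambda_i^j \|\mathcal{L}u_i\|$ when the $\phi_i^j$ are $L^2$-normalized. The largest local eigenvalue of $\mathcal{L}_i^{-1}$ obeys the Friedrichs/Poincaré bound $\lambda_i^1 \le C\,\mathrm{diam}(\omega_i)^2/\kappa_{\min} \le CH^2$, since $\omega_i$ is an $O(H)$ patch. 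Summing over $j\le L_i$ gives at most a factor $L_i$, absorbed into $C$.

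The energy norms of the $\phi_i^j$ enter through the normalization implicit in the constants of \cref{lemma:lssi_convergence}. With $L^2$-normalization, $\|\phi_i^j\|_a = (\lambda_i^j)^{-1/2}$; I will track this carefully and absorb bounded ratios into $C$. Finally, summing over $i$, I would apply Cauchy--Schwarz with the finite overlap and \cref{lem:decomposition} to get $\sum_i\|\mathcal{L}u_i\|\lesssim M\|\mathcal{L}u\|$ (the paper's convention in \eqref{eq_errorIeig}). This gives the term $CM\mathcal{E}^kH^2\|\mathcal{L}u\|$, and adding the two pieces finishes the proof.

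The main obstacle is this middle step. It requires making the normalization of $\phi_i^j$ consistent with the constants $C$ from the iteration theorems so that exactly $H^2$, rather than $H$, emerges. It also requires passing from a sum over patches to $\|\mathcal{L}u\|$ with the factor $M$, where a naive triangle inequality would lose a factor $\sqrt{N_c}$. For that, I would first try an overlap-based estimate $\|\sum_i v_i\|_a^2\le M\sum_i\|v_i\|_a^2$ for $v_i$ supported in $\omega_i$, followed by \cref{lem:decomposition}.
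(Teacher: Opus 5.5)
Your plan follows the paper's proof. The ingredients are C\'ea's lemma, the split through $\mathcal{I}_{\mathrm{eig}}$, and \eqref{eq_errorIeig} for the first term. The second term uses the squared overlap bound $\|\sum_i v_i\|_a^2\le M\sum_i\|v_i\|_a^2$, the iteration theorems, a factor $H^2$ from $\lambda_i^1=\|\mathcal{L}_i^{-1}\|_{L^2\to L^2}=O(H^2)$, and \cref{lem:decomposition}. The only difference is cosmetic. The paper extracts $H^2$ through Bessel's inequality, $\sum_j\langle u_i,\phi_i^j\rangle^2\le\|u_i\|^2\le CH^4\|\mathcal{L}u_i\|^2$, whereas you use the coefficient identity $\langle u_i,\phi_i^j\rangle=\lambda_i^j\langle\mathcal{L}_iu_i,\phi_i^j\rangle$. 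Both rest on the same fact.

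Two points in your write-up need fixing.

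First, the squared overlap route is mandatory, not merely preferable. The unsquared bound $\sum_i\|\mathcal{L}u_i\|\lesssim M\|\mathcal{L}u\|$ is false in general: Cauchy--Schwarz with \cref{lem:decomposition} gives only $\sqrt{N_cM}\,\|\mathcal{L}u\|$. For $\mathcal{L}u\equiv 1$ one really has $\sum_i\|\chi_i\mathcal{L}u\|\gtrsim H^{-d/2}\|\mathcal{L}u\|$. Your triangle-inequality variant would therefore destroy the $H^2$. The middle expression of \eqref{eq_errorIeig} has the same defect, so do not adopt it as a ``convention.'' Only its final bound is usable, and the same overlap argument justifies it.

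Second, the paper does not resolve the normalization issue you flag. It implicitly takes the $\phi_i^j$ to be $L^2$-orthonormal, which Bessel requires. It then treats the constant of \cref{lemma:lssi_convergence} and \cref{lemma:lksi_convergence} (which is $\|w\|_a$ in the proof) as $H$-independent, and the $H^2$ rests entirely on that. Suppose instead the iteration error is measured relative to $\|\phi_i^j\|_a=(\lambda_i^j)^{-1/2}$. Then your identity gives terms of size $C\varepsilon^k(\lambda_i^j)^{1/2}|\langle\mathcal{L}_iu_i,\phi_i^j\rangle|$, and only a factor $H$ emerges. So you cannot ``extract exactly $H^2$'' without the same assumption the paper makes tacitly.

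Finally, keep $\mathcal{L}_iu_i$ and $\mathcal{L}u_i$ distinct. For zero-extended local Dirichlet solutions, the global $\mathcal{L}u_i$ carries normal-flux jumps on $\partial\omega_i$. Hence the construction in \cref{lem:decomposition} does not in general give $\sum_iu_i=u$, and both your argument and the paper's inherit this weakness.
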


\begin{proof}
By the Céa's Lemma and the triangle inequality, we have
\begin{equation}
\begin{aligned}
\|u - \mathcal{P} u \|_a &= \inf_{v \in V_{\mathrm{ms}}^{(n)}} \|u - v\|_a \leq \|u - \mathcal{I}_{\mathrm{ms}} u\|_a \\
&\leq \|u - \mathcal{I}_{\mathrm{eig}} u\|_a + \|\mathcal{I}_{\mathrm{eig}} u - \mathcal{I}_{\mathrm{ms}} u\|_a.
\end{aligned}
\end{equation}
Based on the definitions of $\mathcal{I}_{\mathrm{eig}}$ and $\mathcal{I}_{\mathrm{ms}}$, we have
\[
\mathcal{I}_{\mathrm{eig}} u - \mathcal{I}_{\mathrm{ms}} u = \sum_{i=1}^{N_c} \sum_{j=1}^{L_i} \left\langle u_i, \phi_i^j \right\rangle \left( \phi_i^j - \hat{\phi}_i^j \right).
\]
Using the \cref{lemma:lssi_convergence} and \cref{lemma:lksi_convergence}, we have
\begin{equation}
\label{eq_IeigdiffIms}
    \begin{aligned}
        \|\mathcal{I}_{\mathrm{eig}} u - \mathcal{I}_{\mathrm{ms}} u\|_a^2 & \leq C M \sum_{i=1}^{N_c} \sum_{j=1}^{L_i} \left\langle u_i, \phi_i^j \right\rangle^2 \| \phi_i^j - \hat{\phi}_i^j \|_a^2 \\
        & \leq C M \sum_{i=1}^{N_c} \sum_{j=1}^{L_i} \left\langle u_i, \phi_i^j \right\rangle^2 \left( \varepsilon_i^j \right)^{2k} \\
        & \leq C M \mathcal{E}^{2k} \sum_{i=1}^{N_c} \| u_i \|^2 \\
        & \leq C M  \mathcal{E}^{2k} \sum_{i=1}^{N_c} H^4 \| \mathcal{L} u_i\|^2 \\
        & \leq C M \mathcal{E}^{2k} H^4 \sum_{i=1}^{N_c} \| \chi_i \mathcal{L} u\|^2 \\
        & \leq C M^2 \mathcal{E}^{2k} H^4 \| \mathcal{L} u\|^2 .
    \end{aligned}
\end{equation}
Combining  \cref{eq_errorIeig} and \cref{eq_IeigdiffIms}, the proof is completed.
\end{proof}

\subsection{A priori error bound}
The following lemma gives a regularity estimate for the solution of the parabolic equation \cref{eq:parabolic_strong}.
\begin{lemma}
\label{lemma_ut_estimation}
(cf. \cite[Theorem 5, Section 7.1]{evans2022partial}).
Let $u$ be the solution of the parabolic equation \cref{eq:parabolic_strong}. Then
\begin{equation}
\|u_t\|_{L^2(0,T;L^2(\Omega))}^2 \leq C \left( \|u_0\|_a^2 + \|f\|_{L^2(0,T;L^2(\Omega))}^2 \right).
\end{equation}
\end{lemma}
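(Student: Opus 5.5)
The estimate is the classical energy estimate for the parabolic problem, obtained by testing the equation with $u_t$. I would carry it out rigorously at the Galerkin level and pass to the limit, as in Evans. Concretely, let $\{w_m\}$ be an $L^2$-orthonormal basis of $V$ that is $a$-orthogonal, for instance the eigenfunctions of $\mathcal{L}$. Let $u^N(t)=\sum_{m\le N} d_m(t) w_m$ solve the projected system
\[
\langle u^N_t, v\rangle + a(u^N,v) = \langle f, v\rangle \quad \forall v\in \operatorname{span}\{w_1,\dots,w_N\},
\]
with $u^N(0)$ equal to the $a$-projection of $u_0$. This is a linear ODE system with $L^2$-in-time data, so $u^N$ exists, is absolutely continuous, and $d_m\in H^1(0,T)$. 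The choice of initial datum gives $\|u^N(0)\|_a\le \|u_0\|_a$.

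The key step is to take $v=u^N_t(t)$, which is admissible because it lies in the Galerkin space. Since $a$ is symmetric, $a(u^N,u^N_t)=\tfrac12\frac{d}{dt}\|u^N\|_a^2$ for a.e. $t$. Young's inequality then yields
\[
\|u^N_t\|^2 + \tfrac12 \tfrac{d}{dt}\|u^N\|_a^2 \le \tfrac12\|f\|^2 + \tfrac12\|u^N_t\|^2 .
\]
Integrating over $(0,T)$ and discarding the nonnegative term $\|u^N(T)\|_a^2$ gives
\[
\|u^N_t\|_{L^2(0,T;L^2)}^2 \le \|u^N(0)\|_a^2 + \|f\|_{L^2(0,T;L^2)}^2 \le \|u_0\|_a^2 + \|f\|_{L^2(0,T;L^2)}^2 .
\]
This bound is uniform in $N$, with $C=1$.

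For the limit passage, the usual energy estimate (testing with $u^N$) bounds $u^N$ in $L^2(0,T;V)$. Together with the bound above, this gives a subsequence with $u^N\rightharpoonup \tilde u$ in $L^2(0,T;V)$ and $u^N_t\rightharpoonup \tilde u_t$ in $L^2(0,T;L^2)$. The limit solves \eqref{eq:weak_form}, and by uniqueness of weak solutions $\tilde u=u$. Weak lower semicontinuity of the norm transfers the bound to $u_t$.

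The main obstacle is the justification of the identity $a(u,u_t)=\tfrac12\frac{d}{dt}\|u\|_a^2$. At the continuous level $u_t$ is a priori only in $L^2(0,T;V^*)$, so it cannot be used directly as a test function. The Galerkin detour avoids this, because $u^N$ is smooth enough in time. The only remaining care is to check that the initial data converge appropriately, which holds because $u_0\in V$.
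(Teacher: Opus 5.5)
Your proof is correct and is essentially the Galerkin energy argument behind the result the paper cites (Evans, Theorem~5 of Section~7.1): test the Galerkin system with $u^N_t$, use $\|u^N(0)\|_a\le\|u_0\|_a$, integrate in time, and pass to the weak limit; the paper gives only the citation, not the argument. Your choice of the eigenfunctions of $\mathcal{L}$ as the Galerkin basis, rather than those of the Laplacian, is worth keeping. It gives $C=1$ independently of $\kappa_{\max}/\kappa_{\min}$, and it needs neither the smooth coefficients nor the $C^2$ boundary that the cited theorem assumes, which matters because the paper's $\kappa$ is only $L^\infty$ and $\Omega$ is polyhedral.
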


\begin{lemma}
\label{lemma_proj_para}
    Let u be the solution of \cref{eq:weak_form}, then for any $t>0$, we have
    \begin{equation}
    \label{eq_para_error_energy}
        \| \left(u - \mathcal{P}u \right) (t) \|_a \leq \left( M \sqrt{\lambda^{L+1}} + CM \mathcal{E}^k H^2 \right) \| (f-u_t) (t) \|,
    \end{equation}
    \vspace{-0.5cm}
    \begin{equation}
    \label{eq_para_error_L2}
        \| \left(u - \mathcal{P} u \right) (t) \|_{L^2({\Omega})} \leq \left( M \sqrt{\lambda^{L+1}} + CM \mathcal{E}^k H^2 \right)^2 \| (f-u_t) (t) \|.
    \end{equation}
\end{lemma}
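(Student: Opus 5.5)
The plan is to freeze time and treat $u(t)$ as the solution of an elliptic problem, so that \cref{theorem:proj} applies directly; the $L^2$ bound then follows from an Aubin--Nitsche duality argument.

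\textbf{Energy estimate.} Fix $t>0$. From \cref{eq:weak_form}, $a(u(t),v)=\langle (f-u_t)(t),v\rangle$ for all $v\in V$. Hence $u(t)\in H^1_0(\Omega)$ satisfies $\mathcal{L}u(t)=(f-u_t)(t)$ in the weak sense. The right-hand side lies in $L^2(\Omega)$ for a.e.\ $t$, by \cref{lemma_ut_estimation} and the assumption on $f$. Therefore $u(t)$ is admissible in \cref{lem:decomposition} and \cref{theorem:proj}. Applying \cref{theorem:proj} with $\|\mathcal{L}u(t)\|=\|(f-u_t)(t)\|$ gives \cref{eq_para_error_energy} immediately.

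\textbf{$L^2$ estimate.} Set $e=(u-\mathcal{P}u)(t)$ and let $z\in V$ solve the dual problem $a(z,v)=\langle e,v\rangle$ for all $v\in V$, so that $\mathcal{L}z=e\in L^2(\Omega)$. Testing with $v=e$ and using the Galerkin orthogonality \cref{eq:elliptic_proj} yields
\begin{equation*}
\|e\|^2=a(z,e)=a(z-\mathcal{P}z,e)\le \|z-\mathcal{P}z\|_a\,\|e\|_a .
\end{equation*}
Write $\eta:=M\sqrt{\lambda^{L+1}}+CM\mathcal{E}^kH^2$. Applying \cref{theorem:proj} to $z$ gives $\|z-\mathcal{P}z\|_a\le \eta\,\|\mathcal{L}z\|=\eta\,\|e\|$. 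Dividing by $\|e\|$ gives $\|e\|\le \eta\,\|e\|_a$. Combining this with the energy estimate yields $\|e\|\le\eta^2\|(f-u_t)(t)\|$, which is \cref{eq_para_error_L2}.

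\textbf{Main obstacle.} The only delicate point is justifying that \cref{theorem:proj} applies at each fixed time and to the dual solution $z$. This requires $\mathcal{L}u(t)\in L^2(\Omega)$ pointwise in $t$, which holds for a.e.\ $t$ (or for all $t$ under slightly more regularity). It also requires the constant $C$ in \cref{theorem:proj} to be uniform, i.e.\ independent of the particular function being projected. That constant came from the coefficients $\|w\|_a$ in \cref{lemma:lssi_convergence} and \cref{lemma:lksi_convergence}, which depend only on the initial local spaces and not on $u$, so uniformity holds. I expect the rest to be routine.
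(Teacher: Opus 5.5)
Your proposal is correct and follows the same route as the paper. You freeze $t$, use $\mathcal{L}u=f-u_t$ to apply \cref{theorem:proj} for the energy bound, then run Aubin--Nitsche with the dual solution of $a(w,v)=\langle u-\mathcal{P}u,v\rangle$ and Galerkin orthogonality to get $\|u-\mathcal{P}u\|\le\eta\,\|u-\mathcal{P}u\|_a\le\eta^2\|f-u_t\|$; your remarks on a.e.\ $t$ and on the uniformity of the constant in \cref{theorem:proj} are extra care that the paper leaves implicit.
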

\begin{proof}
    Notice that 
    \begin{equation*}
        \mathcal{L} u = f - u_t.
    \end{equation*}
 By \cref{theorem:proj}, we obtain \cref{eq_para_error_energy}.
    Next, we derive the error estimate in the $L^2$ norm. We will apply the Aubin--Nitsche lift technique. For each $t > 0$, we define $w \in V$ by
\begin{equation}
a(w, v) = \langle u - \mathcal{P} u, v \rangle \quad \forall v \in V.
\end{equation}
By \cref{theorem:proj}, we obtain
\begin{equation*}
\begin{aligned}
\|u - \mathcal{P} u\|^2 &= a(w, u - \mathcal{P} u) \\
&= a(w - \mathcal{P} w, u - \mathcal{P} u) \\
&\leq \|w - \mathcal{P} w \|_a \|u - \mathcal{P} u \|_a \\
&\leq \left( M \sqrt{\lambda^{L+1}} + CM \mathcal{E}^k H^2 \right) \|u - \mathcal{P} u\| \|u - \mathcal{P} u \|_a,
\end{aligned}
\end{equation*}
 Hence, we have
\begin{equation*}
\|u - \mathcal{P} u\| \leq \left( M \sqrt{\lambda^{L+1}} + CM \mathcal{E}^k H^2 \right)^2\|(f - u_t)\|.
\end{equation*}
This completes the proof. 
\end{proof}

\begin{corollary}
\label{corollary_proj}
According to \cref{lemma_ut_estimation}, we have the following estimate
\begin{equation}
    \int_0^T \|u - \mathcal{P} u \|_a^2 dt \leq C \left( M \sqrt{\lambda^{L+1}} + M \mathcal{E}^k H^2 \right)^2 \left( \|u_0\|_a^2 + \|f\|_{L^2(0,T;L^2(\Omega))}^2 \right),
\end{equation}
\vspace{-0.5cm}
\begin{equation}
    \int_0^T \|u - \mathcal{P} u\|^2 dt \leq C \left( M \sqrt{\lambda^{L+1}} + M \mathcal{E}^k H^2 \right)^4 \left( \|u_0\|_a^2 + \|f\|_{L^2(0,T;L^2(\Omega))}^2 \right).
\end{equation}
\end{corollary}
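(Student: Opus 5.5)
The plan is to integrate the pointwise-in-time bounds of \cref{lemma_proj_para} over $(0,T)$ and then control the resulting time integral of the right-hand side by \cref{lemma_ut_estimation}.

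Write $\Theta := M \sqrt{\lambda^{L+1}} + CM \mathcal{E}^k H^2$ for the factor in \cref{lemma_proj_para}. First, square \cref{eq_para_error_energy} at each fixed $t$ and integrate in time, which gives
\begin{equation*}
\int_0^T \|(u - \mathcal{P}u)(t)\|_a^2 \, dt \le \Theta^2 \int_0^T \|(f - u_t)(t)\|^2 \, dt .
\end{equation*}
The right-hand integral is then bounded by the triangle inequality together with $(a+b)^2 \le 2a^2 + 2b^2$:
\begin{equation*}
\int_0^T \|f - u_t\|^2 \, dt \le 2\|f\|_{L^2(0,T;L^2(\Omega))}^2 + 2\|u_t\|_{L^2(0,T;L^2(\Omega))}^2 \le C\left( \|u_0\|_a^2 + \|f\|_{L^2(0,T;L^2(\Omega))}^2 \right),
\end{equation*}
where the last step uses \cref{lemma_ut_estimation}. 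This yields the energy-norm estimate.

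The $L^2$ estimate is obtained in the same way from \cref{eq_para_error_L2}. Squaring that bound produces the factor $\Theta^4$, and the same bound on $\int_0^T\|f-u_t\|^2\,dt$ applies.

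The remaining point is cosmetic. The constant $C$ inside $\Theta$ is independent of $k$, $H$ and $t$, so we can use $(a + Cb)^2 \le \max(1,C)^2 (a+b)^2$ (and similarly for the fourth power) to pull it out as a global prefactor. This matches the stated form, with $M\mathcal{E}^kH^2$ appearing without $C$ inside the bracket.

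The only step that needs care is measurability and applicability at a.e.\ $t$. \Cref{lemma_proj_para} requires $\mathcal{L}u(t) = f(t) - u_t(t) \in L^2(\Omega)$, and this holds for a.e.\ $t$ by the regularity in \cref{lemma_ut_estimation} and $f \in L^2(0,T;L^2(\Omega))$. That is sufficient for integrating the bounds in time.
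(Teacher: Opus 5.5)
Your proposal is correct and matches the argument the paper leaves implicit: you square and integrate the pointwise bounds of \cref{lemma_proj_para}, split $\|f-u_t\|^2 \le 2\|f\|^2 + 2\|u_t\|^2$, and invoke \cref{lemma_ut_estimation}. Your bookkeeping on absorbing the constant inside the bracket into the prefactor, and on applying the lemma for a.e.\ $t$, fills in details the paper omits.
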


We utilize the error decomposition:
\begin{equation}
    u - u_{ms} = (u - \mathcal{P}u) + (\mathcal{P}u - u_{ms}) := \theta + \rho,
\end{equation}
where $\theta$ represents the spatial approximation error and $\rho$ represents the temporal evolution error in the multiscale space. 

For brevity, we denote the spatial convergence factor derived in \cref{theorem:proj} by:
\begin{equation}
    \delta_{LSI} := M\sqrt{\lambda^{L+1}} + CM\mathcal{E}^k H^2.
\end{equation}
According to \cref{lemma_proj_para}, we have the bounds $\|\theta\|_a \lesssim \delta_{LSI}$ and $\|\theta\|_{L^2} \lesssim \delta_{LSI}^2$.
\begin{remark}
Since the local inverse operator $\mathcal{L}_{i}^{-1}$ is defined on the localized subdomain $\omega_i$ with a diameter proportional to $H$, its eigenvalues naturally scale as $\mathcal{O}(H^2)$. Consequently, the target eigenvalue satisfies $\lambda^{L+1} = \mathcal{O}(H^2)$, which implies that the leading term $\sqrt{\lambda^{L+1}}$ is $\mathcal{O}(H)$, governing the overall spatial approximation error. Furthermore, by appropriately choosing the dimension $L_i$ of the local multiscale space to encompass all dominant low-frequency modes associated with the highly conductive channels, the eigenvalue $\lambda^{L+1}$ becomes independent of the high contrast ratio $\kappa_{max}/\kappa_{min}$ \cite{galvis2010domain}. This crucial spectral property guarantees that the proposed LSI method achieves a robust $\mathcal{O}(H)$ convergence rate that is fundamentally independent to severe variations in the multiscale permeability field.
\end{remark}

\begin{theorem}[Energy Norm Estimate]
\label{thm:energy_norm}
Let $u$ and $u_{ms}$ be the solutions of \cref{eq:weak_form} and \cref{eq:weak_form_ms}, respectively. Then the following error estimate holds:
\begin{equation}
\label{eq:energy_norm}
\begin{aligned}
    & \|(u-u_{ms})(\cdot, T)\|^2 + \int_0^T \|u-u_{ms}\|_a^2 dt \\
    & \le C \delta_{LSI}^2 \left( \|u_0\|_a^2 + \|u_{0,ms}\|_a^2 + \|f\|_{L^2(0,T;L^2(\Omega))}^2 \right) + \|(u-u_{ms})(\cdot, 0)\|^2,
    \end{aligned}
\end{equation}
where $C$ is a constant independent of the mesh size $H$.
\end{theorem}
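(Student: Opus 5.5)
We will use the error splitting $u-u_{ms}=\theta+\rho$ with $\theta=u-\mathcal{P}u$ and $\rho=\mathcal{P}u-u_{ms}\in V_{\mathrm{ms}}^{(k)}$. Subtracting \cref{eq:weak_form_ms} from \cref{eq:weak_form} with test functions $v\in V_{\mathrm{ms}}^{(k)}$ gives the Galerkin orthogonality
\begin{equation*}
\langle (u-u_{ms})_t, v\rangle + a(u-u_{ms},v)=0 \quad \forall v\in V_{\mathrm{ms}}^{(k)} .
\end{equation*}
We then test directly with $v=\rho=(u-u_{ms})-\theta$, instead of deriving a separate equation for $\rho$. Writing $e=u-u_{ms}$, this yields
\begin{equation*}
\tfrac12\tfrac{d}{dt}\|e\|^2+\|e\|_a^2=\langle e_t,\theta\rangle + a(e,\theta).
\end{equation*}
The term $a(e,\theta)$ is handled by Young's inequality, $a(e,\theta)\le \tfrac12\|e\|_a^2+\tfrac12\|\theta\|_a^2$.

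Integrating over $(0,T)$ gives
\begin{equation*}
\|e(T)\|^2+\int_0^T\|e\|_a^2\,dt\le \|e(0)\|^2+\int_0^T\|\theta\|_a^2\,dt+2\int_0^T\langle e_t,\theta\rangle\,dt .
\end{equation*}
By \cref{corollary_proj}, the term $\int_0^T\|\theta\|_a^2$ is bounded by $C\delta_{LSI}^2(\|u_0\|_a^2+\|f\|^2_{L^2(0,T;L^2(\Omega))})$.

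For the remaining term we use Cauchy--Schwarz in space and time:
\begin{equation*}
\int_0^T\langle e_t,\theta\rangle\,dt\le \|e_t\|_{L^2(0,T;L^2(\Omega))}\,\|\theta\|_{L^2(0,T;L^2(\Omega))}.
\end{equation*}
This requires control of $e_t=u_t-(u_{ms})_t$. \cref{lemma_ut_estimation} bounds $\|u_t\|_{L^2L^2}$ by $C(\|u_0\|_a^2+\|f\|^2)^{1/2}$. The same argument applied to the semidiscrete problem, testing \cref{eq:weak_form_ms} with $(u_{ms})_t$, gives
\begin{equation*}
\|(u_{ms})_t\|^2_{L^2L^2}\le C(\|u_{0,ms}\|_a^2+\|f\|^2_{L^2L^2}).
\end{equation*}
This is the source of the $\|u_{0,ms}\|_a^2$ term in \eqref{eq:energy_norm}. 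The $L^2$ bound from \cref{corollary_proj} gives $\|\theta\|_{L^2L^2}\le C\delta_{LSI}^2(\cdots)^{1/2}$. The product is therefore $C\delta_{LSI}^2(\|u_0\|_a^2+\|u_{0,ms}\|_a^2+\|f\|^2)$, which closes the estimate.

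The main obstacle is this cross term $\langle e_t,\theta\rangle$. A naive treatment would instead require a bound on $\theta_t=(I-\mathcal{P})u_t$, and hence on $\mathcal{L}u_t$, i.e.\ regularity beyond what is available. The plan avoids this by pairing the $O(\delta_{LSI}^2)$ $L^2$ estimate for $\theta$ with a merely $O(1)$ bound on $e_t$. This is only valid if the constant in \cref{corollary_proj} is as stated. If the squared factor in the $L^2$ bound were unavailable, the fallback would be integration by parts in time, $\int_0^T\langle e_t,\theta\rangle\,dt=\langle e,\theta\rangle|_0^T-\int_0^T\langle e,\theta_t\rangle\,dt$, which needs regularity of $u_t$. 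I would therefore commit to the Cauchy--Schwarz route.
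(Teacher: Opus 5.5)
Your proposal is correct and takes essentially the same route as the paper. You test the Galerkin error equation with $v=\rho$, obtain $\tfrac12\tfrac{d}{dt}\|e\|^2+\|e\|_a^2=\langle e_t,\theta\rangle+a(e,\theta)$, apply Young's inequality and integrate, then control the cross term through $(\|u_t\|+\|(u_{ms})_t\|)\|\theta\|$ using \cref{lemma_ut_estimation}, its semidiscrete analogue, and the $L^2$ bound $\|\theta\|_{L^2(0,T;L^2(\Omega))}\lesssim\delta_{LSI}^2$ from \cref{corollary_proj}. The only difference is cosmetic: the paper first derives a separate equation for $\rho$ using $a(\theta,v)=0$ and then reassembles $\theta+\rho$, whereas your direct substitution $\rho=e-\theta$ reaches the same identity without ever needing $\theta_t$ or the $a$-orthogonality.
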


\begin{proof}
Subtracting the semi-discrete equation \cref{eq:weak_form_ms} from the weak formulation \cref{eq:weak_form}, we satisfy the error equation:
\begin{equation}
    \langle (u-u_{ms})_t, v \rangle + a(u - u_{ms}, v) = 0, \quad \forall v \in V_{ms}^{(k)}.
\end{equation}
Using the decomposition $u - u_{ms} = \theta + \rho$ and the property of the elliptic projection $a(\theta, v) = 0$ for all $v \in V_{ms}^{(k)}$, we rewrite the equation for $\rho \in V_{ms}^{(k)}$ as:
\begin{equation}
    \langle \rho_t+\theta_t, v \rangle + a(\rho, v) = 0.
\end{equation}
Setting $v = \rho$, we obtain:
\begin{equation}
 \langle \theta_t+\rho_t, \rho \rangle + a(\rho, \rho) = 0.
\end{equation}
This can be written as 
\begin{equation}
     \langle \theta_t+\rho_t, \theta+\rho \rangle + a(\theta+\rho, \theta+\rho) = \langle \theta_t+\rho_t, \theta \rangle+a(\theta+\rho, \theta).
\end{equation}
This implies
\begin{equation}
\frac{1}{2} \frac{d}{dt} \|\theta+\rho\|^2 + \|\theta+\rho\|_a^2 = (\theta_t+\rho_t, \theta)+a(\theta+\rho, \theta).
\end{equation}
By the Cauchy--Schwarz inequality and Young's inequality, we have
\begin{equation}
\begin{aligned}
 \frac{1}{2} \frac{d}{dt} \|\theta+\rho\|^2 + \|\theta+\rho \|_a^2 
& \le \|\theta_t+\rho_t\| \|\theta \| + \|\theta_t+\rho_t\|_a \|\theta \|_a \\
&\le (\|\theta_t+\rho_t\| ) \|\theta \| + \frac{1}{2} \|\theta+\rho \|_a^2 + \frac{1}{2} \|\theta \|_a^2.
\end{aligned}
\end{equation}
So, we have
\begin{equation}
\frac{d}{dt}\|\theta+\rho\|^2 + \|\theta+\rho\|_a^2 \le 2(\|\theta_t+\rho_t\|)\|\theta\| + \|\theta \|_a^2.
\end{equation}
Integrating with respect to time, we have
\begin{equation*}
\begin{aligned}
\|(\theta+\rho)(\cdot, T)\|^2 & - \|(\theta+\rho)(\cdot, 0)\|^2 + \int_0^T \|\theta+\rho \|_a^2 \\
& \le \int_0^T 2(\|u_t\| + \|(u_{ms})_t\| )\| \theta \| + \int_0^T \|\theta \|_a^2.
\end{aligned}
\end{equation*}
By \cref{lemma_ut_estimation}, we can bound $u_t$ by the initial data and source function $f$, that is,
\begin{equation*}
\int_0^T \|u_t\|^2 \leq C \left( \|u_0\|_a^2 + \int_0^T \|f\|^2 \right).
\end{equation*}
Similarly, by \cref{eq:weak_form_ms}, we have
\begin{equation*}
 \int_0^T \|(u_{ms})_t\|^2 \leq C \left( \|u_{0,ms}\|_a^2 + \int_0^T \|f\|^2 \right).   
\end{equation*}
Thus, \cref{corollary_proj} and the above two inequalities give \cref{eq:energy_norm}.
This completes the proof.
\end{proof}

\begin{theorem}[$L^2$ Norm Estimate]
\label{thm:l2_norm}
Assume $f_t \in L^1(0,T; L^2(\Omega))$ and $u_{tt} \in L^1(0,T; L^2(\Omega))$. Let $u$ and $u_{ms}$ be the solutions of \eqref{eq:weak_form} and \eqref{eq:weak_form_ms}, respectively. Then:
\begin{equation}
\begin{aligned}
    \|(u-u_{ms}) & (\cdot, T)\|  \le \|u_0 - u_{0,ms}\|  \\
    & +C \delta_{LSI}^2 \left( \max_{0\le t\le T} \|(f-u_t)(t)\|+ \|f_t - u_{tt}\|_{L^1(0,T; L^2(\Omega))} \right).
    \end{aligned}
\end{equation}
\end{theorem}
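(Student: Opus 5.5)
We follow the classical Wheeler-type argument and reuse the splitting $u-u_{ms}=\theta+\rho$ with $\theta=u-\mathcal{P}u$ and $\rho=\mathcal{P}u-u_{ms}\in V_{\mathrm{ms}}^{(k)}$. The estimate then splits as
\[
\|(u-u_{ms})(\cdot,T)\|\le\|\theta(T)\|+\|\rho(T)\|.
\]
The term $\|\theta(T)\|$ is handled directly by the $L^2$ bound \cref{eq_para_error_L2} of \cref{lemma_proj_para}, evaluated at $t=T$. This gives $\|\theta(T)\|\le\delta_{LSI}^2\|(f-u_t)(T)\|$, which is dominated by $\delta_{LSI}^2\max_{0\le t\le T}\|(f-u_t)(t)\|$. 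Everything therefore reduces to controlling $\rho$ in $L^\infty(0,T;L^2)$.

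As in the proof of \cref{thm:energy_norm}, $\rho$ satisfies
\[
\langle\rho_t,v\rangle+a(\rho,v)=-\langle\theta_t,v\rangle\qquad\forall v\in V_{\mathrm{ms}}^{(k)}.
\]
Testing with $v=\rho$ and dropping $\|\rho\|_a^2\ge0$ gives $\tfrac12\tfrac{d}{dt}\|\rho\|^2\le\|\theta_t\|\,\|\rho\|$. Formally this means $\tfrac{d}{dt}\|\rho\|\le\|\theta_t\|$; to make it rigorous, work with $(\|\rho\|^2+\epsilon^2)^{1/2}$ and let $\epsilon\to0$. Integrating from $0$ to $T$ yields
\[
\|\rho(T)\|\le\|\rho(0)\|+\int_0^T\|\theta_t\|\,dt.
\]

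The key step is bounding $\|\theta_t\|$. Because $\kappa$ is time-independent, $\mathcal{P}$ commutes with $\partial_t$, so $\theta_t=u_t-\mathcal{P}u_t$. Differentiating the equation in time gives $\mathcal{L}u_t=f_t-u_{tt}$. Repeating the Aubin--Nitsche argument of \cref{lemma_proj_para} with $u_t$ in place of $u$ then gives
\[
\|\theta_t(t)\|\le\delta_{LSI}^2\|(f_t-u_{tt})(t)\|,
\]
and after integration $\int_0^T\|\theta_t\|\le\delta_{LSI}^2\|f_t-u_{tt}\|_{L^1(0,T;L^2(\Omega))}$. The hypotheses $f_t,u_{tt}\in L^1(0,T;L^2(\Omega))$ are exactly what make this application legitimate for a.e.\ $t$.

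For the initial term, $\rho(0)=\mathcal{P}u_0-u_{0,ms}=(u_0-u_{0,ms})-\theta(0)$. Hence
\[
\|\rho(0)\|\le\|u_0-u_{0,ms}\|+\delta_{LSI}^2\|(f-u_t)(0)\|,
\]
and the last term is again absorbed into the $\max_t$ term. Collecting the three contributions gives the stated estimate with a generic constant $C$.

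The main obstacle is justifying the bound on $\theta_t$. It requires interchanging $\partial_t$ with $\mathcal{P}$ and having $\mathcal{L}u_t\in L^2$ for a.e.\ $t$ so that \cref{theorem:proj} and the duality argument apply. Both follow from the $t$-independence of $a(\cdot,\cdot)$ and the assumed regularity of $f_t$ and $u_{tt}$, which we take as given. Beyond that, the only care needed is the handling of $\|\rho\|$ near points where it vanishes.
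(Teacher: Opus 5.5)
Your proposal is correct and follows essentially the same route as the paper. Both use the splitting $u-u_{ms}=\theta+\rho$ and test the $\rho$-equation with $v=\rho$ to get $\frac{d}{dt}\|\rho\|\le\|\theta_t\|$. Both then bound $\|\theta_t\|=\|u_t-\mathcal{P}u_t\|$ via the Aubin--Nitsche estimate of \cref{lemma_proj_para} applied to $u_t$, and absorb $\|\theta(T)\|$ and $\|\theta(0)\|$ (the latter from $\rho(0)=(u_0-u_{0,ms})-\theta(0)$) into the $\max_t$ term. Your $\epsilon$-regularization via $(\|\rho\|^2+\epsilon^2)^{1/2}$ is simply a more careful version of the paper's step of canceling $\|\rho\|$ where $\rho\neq 0$.
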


\begin{proof}
We start with the error equation for $\rho$: 
\begin{equation*}
    \langle \rho_t, v \rangle + a(\rho, v) = -\langle \theta_t, v\rangle.
\end{equation*}
Choosing $v = \rho$, we have 
\begin{equation*}
    \|\rho\| \frac{d}{dt} \|\rho\| + \|\rho\|_a^2 = -\langle \theta_t, \rho \rangle. 
\end{equation*}

Since $\|\rho\|_a^2 \ge 0$, we have 
\begin{equation*}
    \|\rho\| \frac{d}{dt} \|\rho\| \le |\langle \theta_t, \rho \rangle | \le \|\theta_t\| \|\rho\|.
\end{equation*}
Canceling $\|\rho\|$ (for $\rho \ne 0$), we get the differential inequality:
\begin{equation}
    \frac{d}{dt}\|\rho\| \le \|\theta_t\|.
\end{equation}
Integrating over $[0, T]$ gives:
\begin{equation}
    \|\rho(\cdot, T)\| \le \|\rho(\cdot, 0)\| + \int_0^T \|\theta_t\| dt.
\end{equation}
To estimate $\|\theta_t\|$, we take the time derivative of the definition of the elliptic projection. Let $\theta_t = (u - \mathcal{P}u)_t = u_t - \mathcal{P}u_t$. Note that $\mathcal{P}$ is time-independent because the coefficient $\kappa(x)$ is time-independent. Thus, $\mathcal{P}u_t$ is simply the elliptic projection of $u_t$.
Applying \cref{lemma_proj_para} (the $L^2$-estimate part) to the function $u_t$, we get:
\begin{equation}
    \|\theta_t\| = \|u_t - \mathcal{P}u_t\| \le C \delta_{LSI}^2 \|\mathcal{L}u_t\| = C \delta_{LSI}^2 \|(f - u_t)_t\|.
\end{equation}
The above inequality implies that
\begin{equation}
\begin{aligned}
    \|\rho(\cdot, T)\| & \le \|\rho(\cdot, 0)\| + C \delta_{LSI}^2 \int_0^T \|(f - u_t)_t\| dt \\
    & \le \| u_0 - u_{0,ms} \| + \| \theta_0  \|+ C \delta_{LSI}^2 \int_0^T \|(f - u_t)_t\| dt.
\end{aligned}
\end{equation}
Finally, we have
\begin{equation}
\begin{aligned}
 \|(u - & u_{ms}) (\cdot, T)\|  \leq \|(u - \mathcal{P} u)(\cdot, T)\| + \|(\mathcal{P} u - u_{ms})(\cdot, T)\| \\
& \leq C \delta_{LSI}^2 \left( \max_{0 \leq t \leq T} \|(f - u_t)\| + \int_0^T \| (f_t - u_{tt})\| dt \right)
 + \|u_0 - u_{0,ms}\|.
\end{aligned}
\end{equation}
This completes the proof.
\end{proof}

\section{Contrast-independent partially explicit time discretization} 
\label{sec:temporalsplitting}
To efficiently solve the semidiscrete parabolic problem \cref{eq:weak_form_ms}, we employ a temporal splitting algorithm that allows for a time step independent of the high-contrast variations in the permeability field \cite{chung2021contrast}.

\subsection{Decomposition of the local multiscale space}
We decompose the global multiscale space $V_{\mathrm{ms}}^{(k)}$ into two orthogonal subspaces: an implicit space $V_{H,1}$ and an explicit space $V_{H,2}$.

\textbf{Explicit subspace ($V_{H,2}$):} This space is constructed to capture the dominant multiscale modes of the solution. Based on the local multiscale space $V_{\mathrm{ms},i}^k$, we select the basis functions corresponding to the largest eigenvalues of the local inverse operator $\mathcal{L}_i^{-1}$. Because these low-frequency modes have bounded energy norms relative to their $L^2$ norms, they can be treated explicitly.

\textbf{Implicit subspace ($V_{H,1}$):} This space acts as a correction space and consists of the remaining basis functions generated by the LSI method, corresponding to the smaller eigenvalues of the local inverse operator $\mathcal{L}_i^{-1}$. These high-frequency modes govern the microscopic behavior and require implicit temporal treatment to maintain stability without restrictive time-step constraints.

We define the local multiscale explicit and implicit spaces as $V_{H,2} = \bigoplus_{i=1}^{N_c} V_{H,2,i}$ and $V_{H,1} = \bigoplus_{i=1}^{N_c} V_{H,1,i}$, respectively. Here, the local subspaces $V_{H,2,i} = \text{span}\{\hat{\phi}_i^j\}_{j=1}^{l_i}$ and $V_{H,1,i} = \text{span}\{\hat{\phi}_i^j\}_{j=l_i+1}^{L_i}$ are both generated using the LSI method, where $l_i$ is the number of basis in local explicit subspace. Globally, the space is defined as $V_{ms}^{(k)} = V_{H,1} \oplus V_{H,2}$, allowing us to represent the numerical solution at time step $n$ as $u_{ms}^n = u_{H,1}^n + u_{H,2}^n$.

\begin{remark}
The appropriate selection of $l_{i}$, the dimension of the local explicit subspace $V_{H,2,i}$, is crucial for balancing the computational efficiency and stability of the temporal splitting scheme. In highly heterogeneous media, particularly those with high permeability contrasts such as fractures or channelized networks, the spectrum of the local inverse operator $\mathcal{L}_{i}^{-1}$ typically exhibits a sharp decline, creating a distinct ``spectral gap.'' This gap naturally separates the dominant macroscopic modes (corresponding to the highly conductive features) from the microscopic background modes. To optimize the splitting scheme, $l_{i}$ should be chosen precisely at this spectral gap. By doing so, we ensure that all critical, slow-decaying multiscale components are captured by the explicit subspace $V_{H,2}$ and advanced efficiently. Meanwhile, the remaining high-frequency modes are relegated to the implicit subspace $V_{H,1}$, thereby guaranteeing contrast-independent stability without unnecessary computational overhead.
\end{remark}

\subsection{The temporal splitting scheme}
We consider a fixed time step $\tau$ and define $t^n = n\tau$. To advance the solution from $t^n$ to $t^{n+1}$, we adopt a partially explicit scheme where the equations are decoupled such that $V_{H,1}$ is treated implicitly and $V_{H,2}$ is treated explicitly. 

Find $\{u_{H,1}^n\}_{n=1}^N \in V_{H,1}$ and $\{u_{H,2}^n\}_{n=1}^N \in V_{H,2}$ such that for a parameter $\omega \in [0, 1]$:
\begin{equation} \label{eq:temporal:implicit}
\langle u_{H,1}^{n+1},v \rangle = \langle u_{H,1}^n,v \rangle - \langle u_{H,2}^n - u_{H,2}^{n-1},v \rangle - \tau a(u_{H,1}^{n+1} + u_{H,2}^n, v), \quad \forall v \in V_{H,1}
\end{equation}
\vspace{-0.5cm}
\begin{equation} \label{eq:temporal:explicit}
\langle u_{H,2}^{n+1},v \rangle = \langle u_{H,2}^n,v \rangle - \langle u_{H,1}^n - u_{H,1}^{n-1},v \rangle - \tau a((1-\omega)u_{H,1}^n + \omega u_{H,1}^{n+1} + u_{H,2}^n, v), \quad \forall v \in V_{H,2}
\end{equation}
This formulation ensures that the computationally expensive implicit inversion is restricted to the much smaller subspace $V_{H,1}$, significantly reducing the computational burden.

\subsection{Stability condition}
\label{subcec:stacond}
The primary advantage of this splitting method is that, given a suitable choice of $V_{H,2}$, the stability condition relies solely on the properties of $V_{H,2}$ and is rendered independent of the high-contrast media.

Let $\gamma$ be the strengthened Cauchy-Schwarz constant between the two spaces, defined as:
\begin{equation} \nonumber
\gamma := \sup_{v_1 \in V_{H,1}, v_2 \in V_{H,2}} \frac{\langle v_1, v_2 \rangle}{\|v_1\| \|v_2\|} < 1.
\end{equation}

The partially explicit scheme is stable provided the time step $\tau$ satisfies the condition:
\begin{equation}
\tau \sup_{v \in V_{H,2}} \frac{\|v\|_a^2}{\|v\|^2} \le \frac{1-\gamma^2}{2-\omega}.
\end{equation}
By specifically constructing $V_{H,2}$ from the eigenfunctions associated with the largest eigenvalues of $\mathcal{L}_i^{-1}$, the supremum $\sup_{v \in V_{H,2}} \frac{\|v\|_a^2}{\|v\|^2}$ is bounded independently of the contrast $\kappa_{\max}/{\kappa_{\min}}$.

\subsection{Upper bound for the explicit subspace Rayleigh quotient}

To guarantee that the stability condition in \cref{subcec:stacond} is independent of the high-contrast features of $\kappa(x)$, we must establish an upper bound for the Rayleigh quotient $\sup_{v \in V_{H,2}} \frac{\|v\|_a^2}{\|v\|^2}$. Our analysis relies on the following classical lemma. For a detailed proof, we refer the reader to \cite{chung2021contrast}.
\begin{lemma}
    Assume that there is a constant $0 < \beta_{j,m}<1$ such that
    \begin{equation*}
       \beta_{j,m} := \sup_{v_j \in V_{H,2,j}, v_m \in V_{H,2,m}} \frac{\langle v_j, v_m \rangle }{\|v_j\| \|v_m\|}, \quad \forall j,m=1, \cdots, N_c, \; j \neq m. 
    \end{equation*}
    Let $\beta = \max_{j,m} \beta_{j,m}$, then we have
    \begin{equation}
        \sup_{v \in V_{H,2}} \frac{\|v\|_a^2}{\|v\|^2} \leq N_c (1-\beta)^{-p} \sup_{1 \leq j \leq N_c} \sup_{v_j \in V_{H,2,j}} \frac{\|v_j\|_a^2}{\|v_j\|^2}
    \end{equation}
    where $p$ is the smallest integer greater than or equal to $\log_2 N_c$.
\end{lemma}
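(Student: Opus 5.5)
The plan splits the estimate into an energy part and an $L^2$ part, bounding the first by a crude local triangle/Cauchy--Schwarz argument and the second by a recursive coercivity argument. Write any $v \in V_{H,2}$ as $v = \sum_{j=1}^{N_c} v_j$ with $v_j \in V_{H,2,j}$. Let
\begin{equation*}
\Lambda := \sup_{1 \leq j \leq N_c} \sup_{v_j \in V_{H,2,j}} \frac{\|v_j\|_a^2}{\|v_j\|^2}.
\end{equation*}
It then suffices to prove two bounds:
\begin{equation*}
\|v\|_a^2 \le N_c \sum_j \|v_j\|_a^2 \le N_c \Lambda \sum_j \|v_j\|^2,
\qquad
\sum_j \|v_j\|^2 \le (1-\beta)^{-p}\|v\|^2 .
\end{equation*}

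The energy bound is routine. By the triangle inequality, $\|v\|_a \le \sum_j \|v_j\|_a$, and Cauchy--Schwarz in $\mathbb{R}^{N_c}$ gives $\|v\|_a^2 \le N_c \sum_j \|v_j\|_a^2$. Applying the local Rayleigh quotient bound to each summand gives the first inequality. One could sharpen $N_c$ to the overlap constant $M$, but the statement only needs $N_c$.

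The core is the $L^2$ lower bound $\|\sum_j v_j\|^2 \ge (1-\beta)^{p}\sum_j\|v_j\|^2$. I would prove it by a binary-tree (divide and conquer) induction on the number of subspaces. For two subspaces $A$, $B$ with cosine bound $\beta$,
\begin{equation*}
\|a+b\|^2 \ge \|a\|^2+\|b\|^2 - 2\beta\|a\|\|b\| \ge (1-\beta)(\|a\|^2+\|b\|^2).
\end{equation*}
Now pad the index set to $2^p \ge N_c$ with trivial subspaces and pair the subspaces into groups at each level of a tree of depth $p$. At each level one merges two sums $a = \sum_{j\in S_1} v_j$ and $b = \sum_{m \in S_2} v_m$, which loses a factor $(1-\beta')$. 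Here $\beta'$ is the cosine between the two merged sum spaces. After $p$ levels the accumulated factor is $(1-\beta)^p$, provided each $\beta' \le \beta$.

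The main obstacle is exactly this step: the cosine between sums of subspaces is not automatically bounded by the pairwise maximum $\beta$. I would first try to exploit the locality of the supports, either by choosing the pairing so that grouped subdomains are geometrically separated, or by reducing to the pairwise angles via an inductive bound on $\langle a,b\rangle$. Failing that, I would follow the argument of \cite{chung2021contrast}, which the paper cites. That argument takes as the working hypothesis that the same constant $\beta$ controls the angle between the merged groups at every level. I would interpret the assumption $\beta<1$ in that sense and state this explicitly.

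Combining the two bounds gives
\begin{equation*}
\|v\|_a^2 \le N_c\Lambda(1-\beta)^{-p}\|v\|^2.
\end{equation*}
Taking the supremum over $v \in V_{H,2}$ yields the claim.
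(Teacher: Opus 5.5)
You found the real weak point, and it cannot be repaired. With only the pairwise hypothesis as written, the lemma is false. So the step where you need the cosine between merged groups to be at most $\beta$ cannot be closed, and adopting the group-wise bound changes the hypothesis rather than interpreting it.

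Here is a counterexample, even with locally supported spaces. Let $N_c=3$, and let $\omega_1,\omega_2,\omega_3$ share a common region $D$, with each $\omega_j$ also containing a region $E_j$ that meets neither of the other two subdomains (three overlapping discs, say). Take $L^2$-orthonormal $\phi,\psi\in H^1_0(D)$ and put $u_1=\phi$, $u_2=-\tfrac12\phi+\tfrac{\sqrt{3}}{2}\psi$, $u_3=-\tfrac12\phi-\tfrac{\sqrt{3}}{2}\psi$. Then $\|u_j\|=1$, $\langle u_j,u_m\rangle=-\tfrac12$ for $j\neq m$, and $u_1+u_2+u_3=0$. Pick $h_j\in H^1_0(E_j)$ with $\|h_j\|=1$ and $\|h_j\|_a^2=K$, and set $\epsilon=K^{-1/2}$ and $V_{H,2,j}=\mathrm{span}\{u_j+\epsilon h_j\}$. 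Then $\beta=\frac{1}{2(1+\epsilon^2)}<\frac12$ and $p=2$. Each local Rayleigh quotient equals $(\|u_j\|_a^2+1)/(1+\epsilon^2)\le C_D+1$, where $C_D=\max_j\|u_j\|_a^2$ is independent of $K$. Hence the right-hand side of the lemma is below $12(C_D+1)$. Yet
\[
v=\sum_{j=1}^{3}(u_j+\epsilon h_j)=\epsilon\,(h_1+h_2+h_3)\quad\text{has}\quad \frac{\|v\|_a^2}{\|v\|^2}=K\to\infty .
\]

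Because the three subdomains overlap mutually, no choice of pairing and no use of locality can rescue the stated constant. Your inductive idea also fails quantitatively. From the level-one bound you only get
$|\langle v_1+v_2,v_3+v_4\rangle|\le\beta(\|v_1\|+\|v_2\|)(\|v_3\|+\|v_4\|)\le\frac{2\beta}{1-\beta}\|v_1+v_2\|\,\|v_3+v_4\|$,
which is vacuous for $\beta\ge\frac13$ and gets worse at each level. What locality genuinely gives is a Gershgorin-type bound $\|\sum_j v_j\|^2\ge(1-N_{\mathrm{nb}}\beta)\sum_j\|v_j\|^2$, where $N_{\mathrm{nb}}$ is the maximal number of subdomains meeting a given one. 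That bound holds under the pairwise hypothesis, but it needs $\beta<1/N_{\mathrm{nb}}$ and gives a different constant.

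So state the stronger hypothesis explicitly as part of the lemma: for every level of the dyadic grouping and every pair of sibling groups $S_1,S_2$, the cosine between $\sum_{j\in S_1}V_{H,2,j}$ and $\sum_{m\in S_2}V_{H,2,m}$ is at most $\beta$. Under that hypothesis your argument is complete. The energy bound $\|v\|_a^2\le N_c\Lambda\sum_j\|v_j\|^2$, the two-space inequality $\|w_1+w_2\|^2\ge(1-\beta)(\|w_1\|^2+\|w_2\|^2)$, and the $p$-level induction (padding with trivial spaces) give exactly $N_c(1-\beta)^{-p}\Lambda$.

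The paper gives no proof of its own, only the pointer to \cite{chung2021contrast}. If that reference does work with the group-wise angle bound, its hypothesis is strictly stronger than the pairwise one stated here.
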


This lemma demonstrates how the maximum Rayleigh quotient in the global multiscale space $V_{H,2}$ can be controlled by those in the local multiscale spaces $V_{H,2,i}$. According to the subspace iteration method, the maximum Rayleigh quotient in $V_{H,2,i}$ is approximately $\lambda_i^{l_i}$, the $l_i$-th eigenvalue of the inverse operator $\mathcal{L}_i^{-1}$. 
\begin{lemma}
Define $\mathcal{Q}_i = \sup_{v_i \in V_{H,2,i}} \frac{\|v_i\|_a^2}{\|v_i\|^2}$. Applying the localized standard subspace iteration method yields the bound
\begin{equation}
    | \mathcal{Q}_i - \frac{1}{\lambda_i^{l_i} } | = \mathcal{O} \left( \left| \frac{\lambda_i^{L_i+1}}{\lambda_i^{l_i}} \right|^{2k} \right).
\end{equation}
In contrast, for the localized Krylov subspace iteration method, we obtain the error bound
\begin{equation}
    | \mathcal{Q}_i - \frac{1}{\lambda_i^{l_i} } | = \lambda_i^1 \mathcal{O} \left( \left| \frac{1}{C_{k-l_i}\left( 1+2 \gamma_i^{l_i}\right)} \right|^{2} \right),
\end{equation}
where $\gamma_i^{l_i} = \frac{\lambda_i^{l_i} - \lambda_i^{l_i+1}}{ \lambda_i^{l_i+1}}$ and $C_{k-l_i}(x)$ denotes the Chebyshev polynomial of the first kind of degree $k-l_i$. For $x > 1$, the Chebyshev polynomial $C_{k-l_i}(x)$ grows exponentially.
\end{lemma}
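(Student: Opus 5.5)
The plan is to treat $\mathcal{Q}_i$ as the largest Ritz value of the generalized problem $a(v,w)=\mu\langle v,w\rangle$ restricted to $V_{H,2,i}=\operatorname{span}\{\hat{\phi}_i^j\}_{j=1}^{l_i}$. We compare it with the exact value $1/\lambda_i^{l_i}$ attained on $W_i:=\operatorname{span}\{\phi_i^1,\dots,\phi_i^{l_i}\}$. By min-max, $\sup_{v\in W_i}\|v\|_a^2/\|v\|^2 = 1/\lambda_i^{l_i}$, attained at $\phi_i^{l_i}$. The standard Rayleigh–Ritz perturbation argument then applies: eigenvalue errors are quadratic in eigenvector (subspace) errors measured in the energy norm. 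This is the source of the exponent $2k$, resp. the squared Chebyshev factor.

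\textbf{Step 1: subspace closeness.} By \cref{lemma:lssi_convergence} (LSSI) and \cref{lemma:lksi_convergence} (LKSI), for each $j\le l_i$ we have $\|\hat{\phi}_i^{j}-\phi_i^j\|_a\le C\delta_j$. Here $\delta_j=(\lambda_i^{L_i+1}/\lambda_i^j)^k$ for LSSI, and $\delta_j=C_{k-j}(1+2\gamma_i^j)^{-1}$ for LKSI, which is what the LKSI proof actually produces before the crude lower bound is applied. Since $\lambda_i^j\ge\lambda_i^{l_i}$ for $j\le l_i$, all these are bounded by the $j=l_i$ rate $\delta$. For LKSI, one must also check that the Chebyshev factor for $j<l_i$ is dominated by the one for $l_i$. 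I expect this to hold because a larger gap and a higher degree both help, but the constants require care. The Poincaré-type bound $\|v\|^2\le\lambda_i^1\|v\|_a^2$ on $V(\omega_i)$ converts energy errors into $L^2$ errors. This conversion is where the factor $\lambda_i^1$ in the LKSI statement would enter.

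\textbf{Step 2: Ritz value perturbation.} Write any $v\in V_{H,2,i}$ as $v=\sum_{j\le l_i}c_j\hat{\phi}_i^j=\tilde v+e$ with $\tilde v=\sum c_j\phi_i^j\in W_i$ and $\|e\|_a\le C\delta\|\tilde v\|_a$. Here we use the $a$-orthogonality of the $\phi_i^j$ and the fact that the Gram matrices are close to diagonal for small $\delta$. Next, split $e=e_W+e_\perp$ into its part in $W_i$ and its $a$- (equivalently $L^2$-) orthogonal part. The key identity is
\[
\frac{\|v\|_a^2}{\|v\|^2}-\frac{1}{\lambda_i^{l_i}}=\frac{\|\tilde v+e_W\|_a^2-\frac{1}{\lambda_i^{l_i}}\|\tilde v+e_W\|^2+\|e_\perp\|_a^2-\frac{1}{\lambda_i^{l_i}}\|e_\perp\|^2}{\|v\|^2},
\]
which holds because cross terms vanish by orthogonality of the eigenspaces. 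The $W_i$ part contributes a nonpositive quantity. Near the maximizer $\tilde v\approx\phi_i^{l_i}$ it is zero to first order, and the $e_\perp$ part is $O(\|e_\perp\|_a^2)=O(\delta^2)$. Choosing $v=\hat\phi_i^{l_i}$ bounds $\mathcal{Q}_i$ below by $1/\lambda_i^{l_i}-O(\delta^2)$, by the same identity applied with $\tilde v=\phi_i^{l_i}$. The supremum over $v$ bounds it above by $1/\lambda_i^{l_i}+\sup\|e_\perp\|_a^2/\|v\|^2=1/\lambda_i^{l_i}+O(\delta^2)$. Together these give $|\mathcal{Q}_i-1/\lambda_i^{l_i}|=O(\delta^2)$. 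Substituting $\delta$ gives the LSSI rate $(\lambda_i^{L_i+1}/\lambda_i^{l_i})^{2k}$, and for LKSI the squared inverse Chebyshev factor, with the $\lambda_i^1$ arising from norm conversion.

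\textbf{Main obstacle.} The hardest step is uniformity of the upper bound over the whole subspace rather than a single vector. We need $\|e_\perp\|_a\le C\delta\|v\|$ uniformly in the coefficients $c_j$. This requires the Gram matrix of $\{\hat\phi_i^j\}$ in the $L^2$ inner product to be uniformly well conditioned, which I would get from its $O(\delta)$-closeness to $\operatorname{diag}(\|\phi_i^j\|^2)$ for $k$ large. I would also need the constants $C$ of \cref{lemma:lssi_convergence,lemma:lksi_convergence} to be controlled, since they depend on the initial data. For LKSI the delicate point is comparing the Chebyshev factors across $j\le l_i$. If that fails, I would fall back on the weaker statement using $\max_{j\le l_i}$ of the factors.
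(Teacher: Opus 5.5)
\textbf{Comparison with the paper.} The paper gives no proof of this lemma; it cites Saad. That means Rayleigh--Ritz theory applied to $\mathcal{L}_i^{-1}$ in the $a$-inner product, where the Rayleigh quotient is $\|v\|^2/\|v\|_a^2$. The Ritz values $\theta_1\ge\theta_2\ge\cdots$ on the LSI space are therefore the reciprocals of your generalized Ritz values, and on the span of the top $l_i$ Ritz vectors $\mathcal{Q}_i=1/\theta_{l_i}$. The LSSI rate is the classical Ritz-value estimate $O(|\lambda_{L+1}/\lambda_l|^{2k})$ for subspace iteration. The LKSI rate is the Kaniel--Paige--Saad bound, and its spectral-width prefactor $\lambda_1-\lambda_{\min}$ is the source of $\lambda_i^1$, not a Poincaré conversion.

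Your route is different and self-contained. You perturb from the per-eigenfunction estimates of Section~4 via $v=w+e_\perp$ with $w\in W_i$, which gives $\mathcal{Q}_i-1/\lambda_i^{l_i}\le\sup_v\|e_\perp\|_a^2/\|v\|^2$. It also covers both readings of $V_{H,2,i}$. Courant--Fischer inside the LSI space gives $1/\lambda_i^{l_i}\le 1/\theta_{l_i}\le\mathcal{Q}_i$, so your bound for $\operatorname{span}\{\hat\phi_i^j\}$ controls the Ritz value too. The same min--max gives $\mathcal{Q}_i\ge 1/\lambda_i^{l_i}$ outright, so your perturbative lower bound is unnecessary. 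For LSSI your argument goes through as written, because $(\lambda_i^{L_i+1}/\lambda_i^j)^k$ is monotone in $j$.

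\textbf{The gap (LKSI).} It sits exactly at the step you flag. Your upper bound is only as good as $\max_{j\le l_i}\|\hat\phi_i^j-\phi_i^j\|_a$. The bound you import from \cref{lemma:lksi_convergence} for index $j$ is $1/C_{k-j}(1+2\gamma_i^j)$, which depends on the consecutive gap at $j$. The relative gaps are not monotone in $j$, so ``a larger gap helps'' is simply false.

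For example, take $\lambda_i^1=1$, $\lambda_i^2=0.99$, $\lambda_i^3=0.3$ and $l_i=2$. Then $\gamma_i^1\approx 0.01$ while $\gamma_i^2=2.3$. The $j=1$ factor decays only like $1.22^{-k}$, whereas the claimed rate is roughly $11^{-k}$. Your fallback therefore proves a strictly weaker statement.

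\textbf{The fix.} Do not use the theorem as stated; rebuild the filter polynomial for each $j\le l_i$:
\begin{equation*}
p_j(\lambda)=\prod_{\substack{m\le l_i\\ m\neq j}}\frac{\lambda-\lambda_i^m}{\lambda_i^j-\lambda_i^m}\cdot\frac{C_{k-l_i}\bigl(x(\lambda)\bigr)}{C_{k-l_i}\bigl(x(\lambda_i^j)\bigr)},\qquad x(\lambda)=\frac{2\lambda-\lambda_i^{l_i+1}}{\lambda_i^{l_i+1}}.
\end{equation*}
This polynomial has the following properties.
\begin{itemize}
\item It has degree $k-1$, so $p_j(\mathcal{L}_i^{-1})\psi_i^0$ lies in the Krylov space.
\item It vanishes at every $\lambda_i^m$ with $m\le l_i$, $m\ne j$, and equals $1$ at $\lambda_i^j$.
\item On $[0,\lambda_i^{l_i+1}]$ it is bounded by $\prod_{m\ne j}\lambda_i^m/|\lambda_i^j-\lambda_i^m|$ times the Chebyshev ratio.
\end{itemize}
Since $x(\lambda_i^j)\ge 1+2\gamma_i^{l_i}$ and $C_{k-l_i}$ is increasing on $[1,\infty)$, every $j\le l_i$ gets the uniform rate $1/C_{k-l_i}(1+2\gamma_i^{l_i})$ with a $k$-independent constant. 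This needs $\lambda_i^1,\dots,\lambda_i^{l_i}$ to be simple and $\psi_i^0$ to have a nonzero component along each $\phi_i^j$, $j\le l_i$.

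Feeding this into your Step~2 yields the stated squared Chebyshev rate. It is the same mechanism as Kaniel--Paige--Saad, whose polynomial vanishes at the first $l_i-1$ Ritz values so that only $\gamma_i^{l_i}$ enters. Finally, do not try to reproduce the factor $\lambda_i^1$ through Poincaré. In your argument Poincaré only controls the Gram-matrix conditioning, and any $k$-independent prefactor is absorbed by the $\mathcal{O}(\cdot)$.
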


The proof of this lemma can be found in \cite{saad2011numerical}. This lemma establishes how the maximum Rayleigh quotient within the subspace $V_{H,2,i}$ approximates $\frac{1}{\lambda_i^{l_i}}$. Interestingly, our numerical observations suggest that the largest eigenvalues of the inverse operator $\mathcal{L}_i^{-1}$ are robust with respect to the contrast, despite the current lack of a rigorous theoretical proof. Consequently, we conclude that, provided the iteration count $k$ and the subspace dimension $l_i$ are chosen appropriately, the stability of the temporal splitting scheme \cref{eq:temporal:implicit}--\cref{eq:temporal:explicit} remains independent of the contrast.

\section{Numerical results}
\label{sec:numresult}
In this section, we present several numerical examples to evaluate the performance of the proposed LSI method for solving the multiscale parabolic equation \cref{eq:parabolic_strong}. To establish the localized subdomains for our method, we systematically enlarge each coarse element $K_i \in \mathcal{T}_H$ by appending adjacent coarse grid blocks. Specifically, the oversampled regions $K_i^m$ are defined recursively as follows:
$$K_i^0 := K_i,$$
$$K_i^m := \text{int}\left( \bigcup_{T \in \mathcal{T}_H, T \cap \overline{K}_i^{m-1} \neq \emptyset} T \right), \quad m = 1, 2, 3, \dots$$
Here, the integer $m$ denotes the number of oversampling layers. In our simulations, the local target subdomain is defined as $\omega_i = K_i^m$. 

To accurately capture the microscale features and evaluate the local problems, a sufficiently fine mesh $\mathcal{T}_h$ is employed. This fine-scale grid also serves to compute the highly resolved reference solutions. For all subsequent experiments, the reported approximation errors are computed relative to these fine-grid reference solutions. Furthermore, the numerical tests are executed on a standard desktop computer equipped with an Intel Core i7 processor (3.4 GHz) and 16 GB of RAM.

\subsection{Accuracy and convergence of the LSI method}
\label{sec:ex1}

In our first numerical experiment, we investigate the spatial approximation accuracy and the convergence behavior of the proposed LSI method. We define the computational domain as a unit square, i.e., $\Omega = (0, 1)^2$. The spatial domain is partitioned using a uniform coarse grid with a mesh size of $H = 1/10$. To fully resolve the underlying microscale heterogeneities, each coarse element is partitioned, yielding a fine-scale mesh size of $h = 1/100$.

The spatial heterogeneity of the porous medium is characterized by a multiscale permeability field $\kappa(x)$, which is visually depicted in \cref{fig:kappa_ex1}. This specific configuration features complex, highly oscillatory microstructures that pose significant challenges for standard finite element methods. For the time-dependent components of the parabolic equation \cref{eq:parabolic_strong}, we set the final simulation time to $T = 0.1$ and employ a uniform time step size $\tau$ (e.g., $\tau = 10^{-4}$) for the temporal integration. The source term is defined as $f(x,t) = 2\pi^2 \sin{\pi x_1} \sin{\pi x_2}$, and the initial condition is set to $u_0(x) = 0$.

\begin{figure}[htbp]
    \centering
    \includegraphics[width=0.5\textwidth]{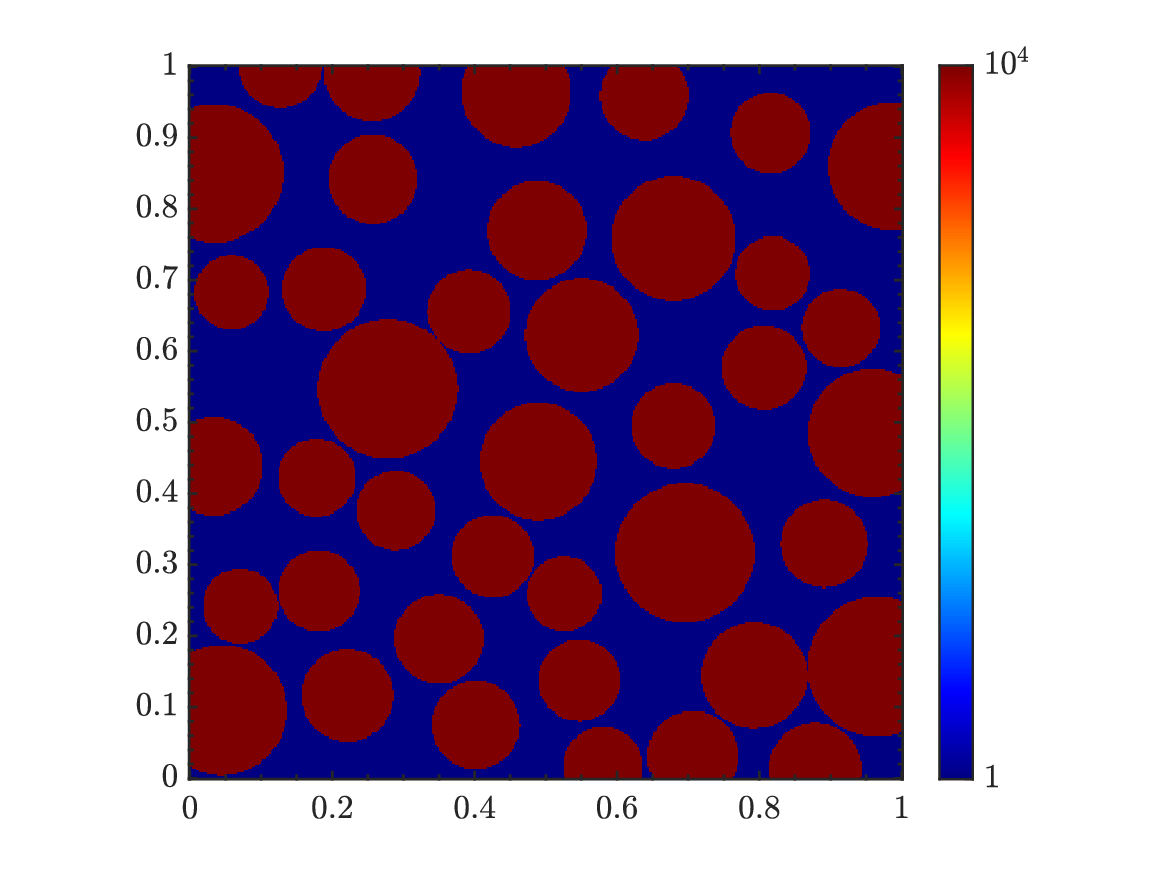}
    \caption{The spatial distribution of the multiscale permeability field $\kappa(x)$.}
    \label{fig:kappa_ex1}
\end{figure}

For the LSSI method, the initial local space is spanned by four standard bilinear shape functions defined on the central coarse element $K_i$. In contrast, for the LKSI method, the initial local function is simply chosen as a constant function restricted to the central coarse element $K_i$. In both approaches, we set the number of oversampling layers to $m=4$ and perform $n=4$ iteration steps. \Cref{fig:solution_comparison_ex1} presents the profiles of the fine-grid reference solution alongside the numerical solutions generated by the LSSI and LKSI methods at the final simulation time. 
Visual comparisons indicate that both proposed methods accurately capture the complex multiscale features of the global solution. Despite employing a highly localized, low-dimensional approximation space, the LSSI and LKSI solutions exhibit excellent visual agreement with the high-fidelity reference solution.

\begin{figure}[htbp]
    \centering
    \begin{subfigure}[b]{0.32\textwidth}
        \centering
        \includegraphics[width=\textwidth]{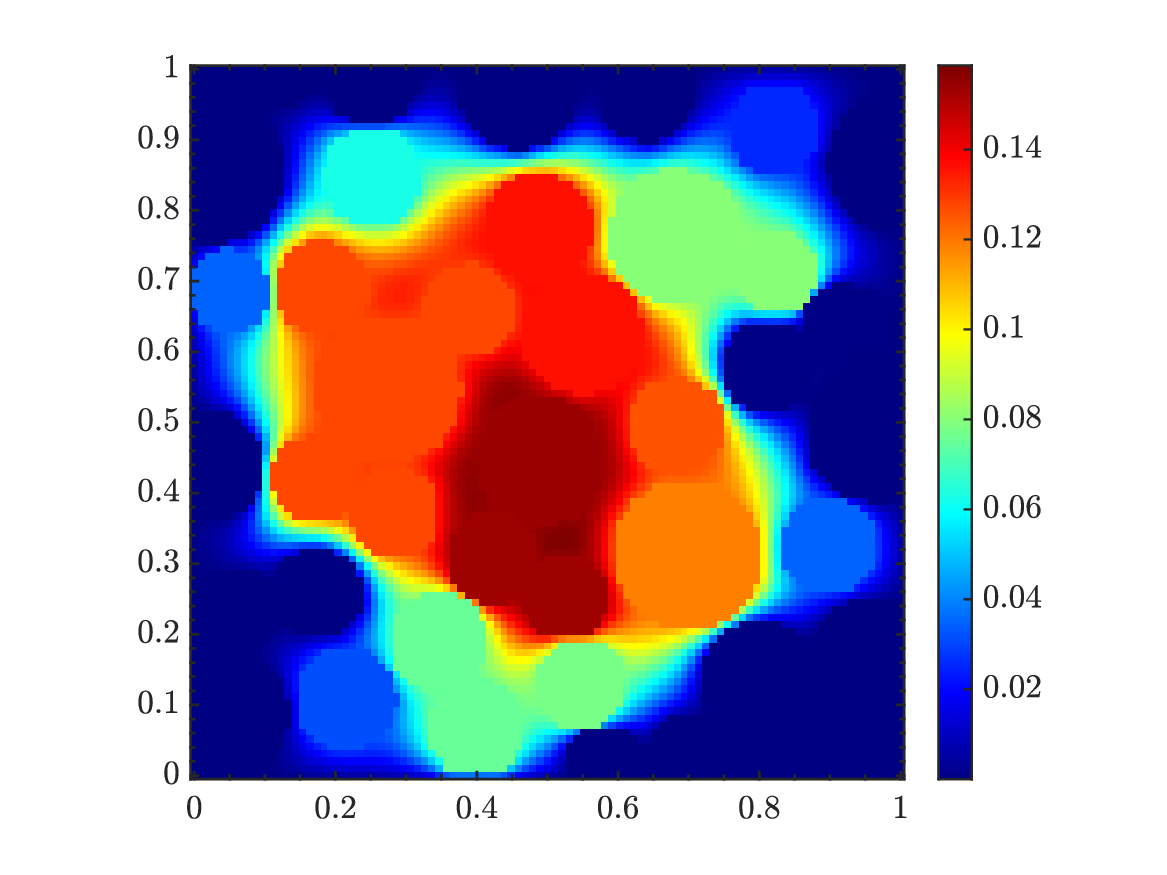}
        \caption{Reference solution}
    \end{subfigure}
    \hfill
    \begin{subfigure}[b]{0.32\textwidth}
        \centering
        \includegraphics[width=\textwidth]{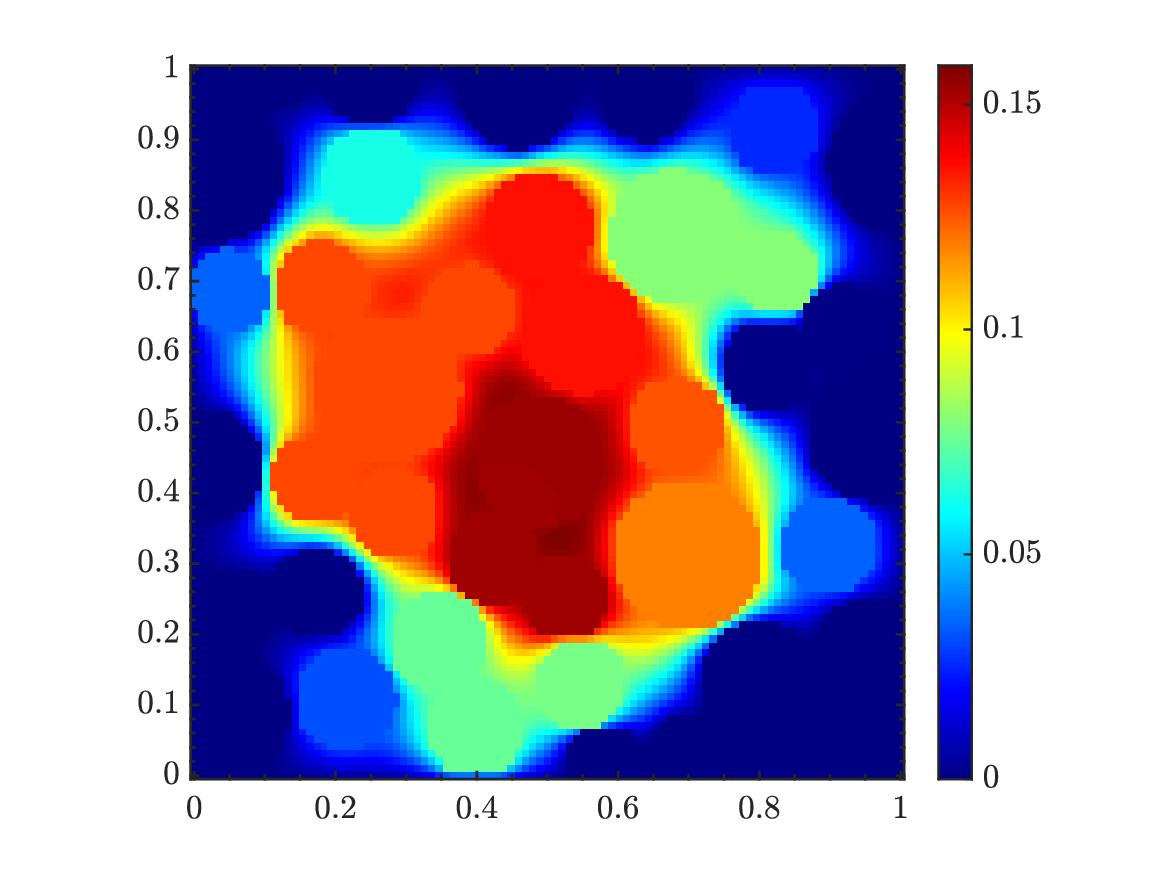}
        \caption{LSSI solution}
    \end{subfigure}
    \hfill
    \begin{subfigure}[b]{0.32\textwidth}
        \centering
        \includegraphics[width=\textwidth]{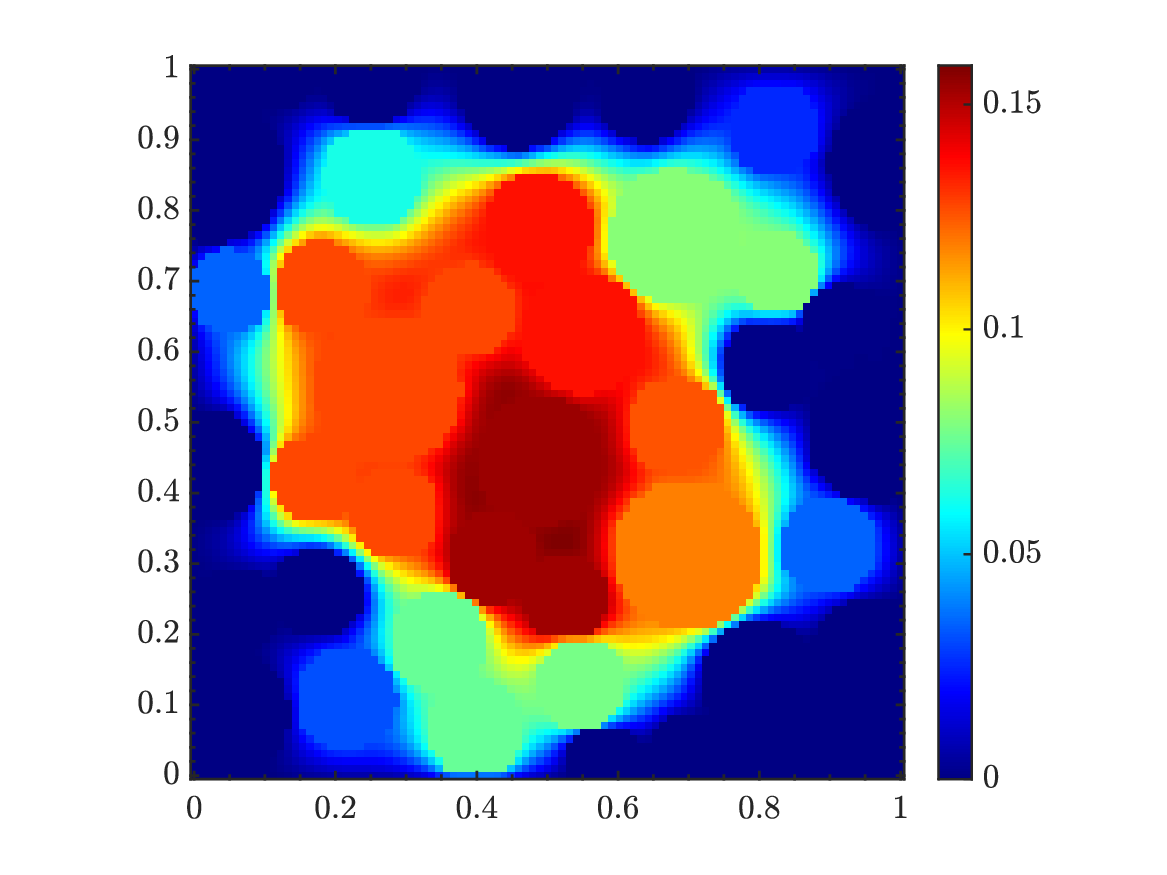}
        \caption{LKSI solution}
    \end{subfigure}
    \caption{Comparison of the numerical solutions at the final time $T$. The multiscale solutions are computed with $m=4$ oversampling layers and $n=4$ iteration steps.}
    \label{fig:solution_comparison_ex1}
\end{figure}


\begin{table}[htbp]
    \centering
    \caption{Comparison of LSSI and LKSI methods with different iteration steps.}
    \label{tab:ex1_results}
    \begin{tabular}{lccccc}
        \toprule
        Method & Energy error & $L^2$ error & CPU time (s) & DoF & NoLP \\
        \midrule
        LSSI-1 & $1.76 \times 10^{-2}$ & $1.04 \times 10^{-3}$ & $3.29$ & $400$ & $400$ \\
        LSSI-2 & $8.32 \times 10^{-3}$ & $4.30 \times 10^{-4}$ & $4.26$ & $400$ & $800$ \\
        LSSI-3 & $7.23 \times 10^{-3}$ & $3.39 \times 10^{-4}$ & $4.93$ & $400$ & $1200$ \\
        LSSI-4 & $7.77 \times 10^{-3}$ & $4.19 \times 10^{-4}$ & $5.99$ & $400$ & $1600$ \\
        \midrule
        LKSI-1 & $1.52 \times 10^{-2}$ & $8.59 \times 10^{-4}$ & $4.42$ & $100$ & $100$ \\
        LKSI-2 & $7.62 \times 10^{-3}$ & $4.13 \times 10^{-4}$ & $4.54$ & $200$ & $200$ \\
        LKSI-3 & $6.97 \times 10^{-3}$ & $3.88 \times 10^{-4}$ & $4.76$ & $300$ & $300$ \\
        LKSI-4 & $6.75 \times 10^{-3}$ & $4.00 \times 10^{-4}$ & $5.11$ & $400$ & $400$ \\
        LKSI-5 & $6.21 \times 10^{-3}$ & $3.14 \times 10^{-4}$ & $6.46$ & $500$ & $500$ \\
        LKSI-6 & $5.74 \times 10^{-3}$ & $2.41 \times 10^{-4}$ & $7.41$ & $600$ & $600$ \\
        \bottomrule
    \end{tabular}
\end{table}

\Cref{tab:ex1_results} presents the numerical results, comparing the performance of the LSSI and LKSI methods across different iteration steps in terms of energy error, $L^2$ error, computational time for basis construction (CPU time), degrees of freedom (DoF), and the number of local problems (NoLP). Here, LSSI-$n$ and LKSI-$n$ denote the corresponding methods with $n$ iteration steps. For all test cases, the number of oversampling layers is consistently set to $m=4$.
As observed from the table, both the LSSI and LKSI methods exhibit similar convergence behaviors: the energy error and $L^2$ error decrease significantly as the iteration step $n$ increases from 1 to 3, and then stabilize when $n=4$, indicating that both methods approach their optimal accuracy limits under the current spatial grid resolution. 

However, a detailed comparison highlights the superior efficiency of the LKSI method. For any given iteration step $n$, LKSI requires solving substantially fewer local problems than LSSI. For instance, at the fourth iteration step, LSSI-4 requires solving 1600 local problems, whereas LKSI-4 only requires 400. Consequently, while maintaining the same global DoF (400), LKSI-4 not only achieves a slightly better accuracy in both energy and $L^2$ norms but also requires less CPU time ($5.11$s) compared to LSSI-4 ($5.99$s). 
This underscores the superior computational efficiency of the LKSI method, which delivers comparable or higher accuracy while drastically reducing the burden of solving local eigenvalue problems.

\begin{figure}[htbp]
    \centering
    \begin{subfigure}{0.48\textwidth}
        \centering
        \includegraphics[width=\textwidth]{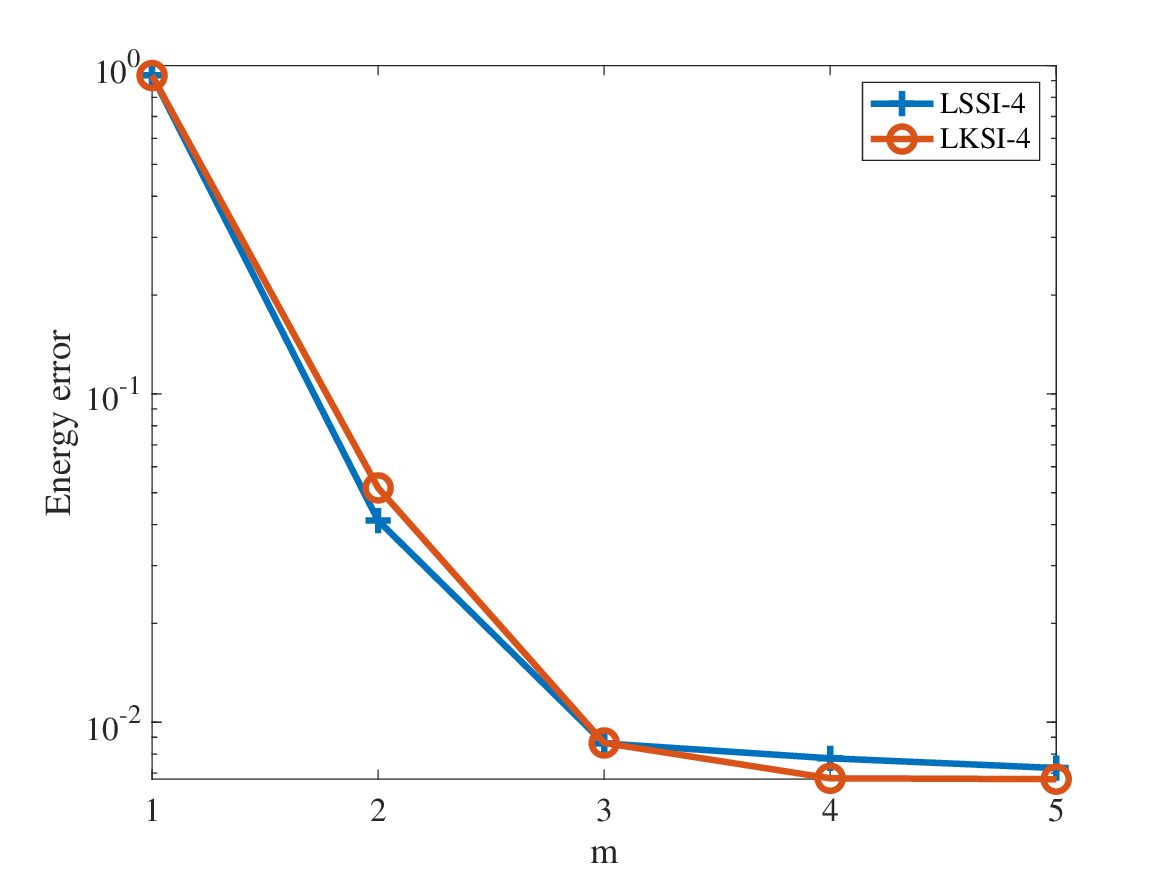}
        \label{fig:energy_ex1}
    \end{subfigure}
    \hfill 
    \begin{subfigure}{0.48\textwidth}
        \centering
        \includegraphics[width=\textwidth]{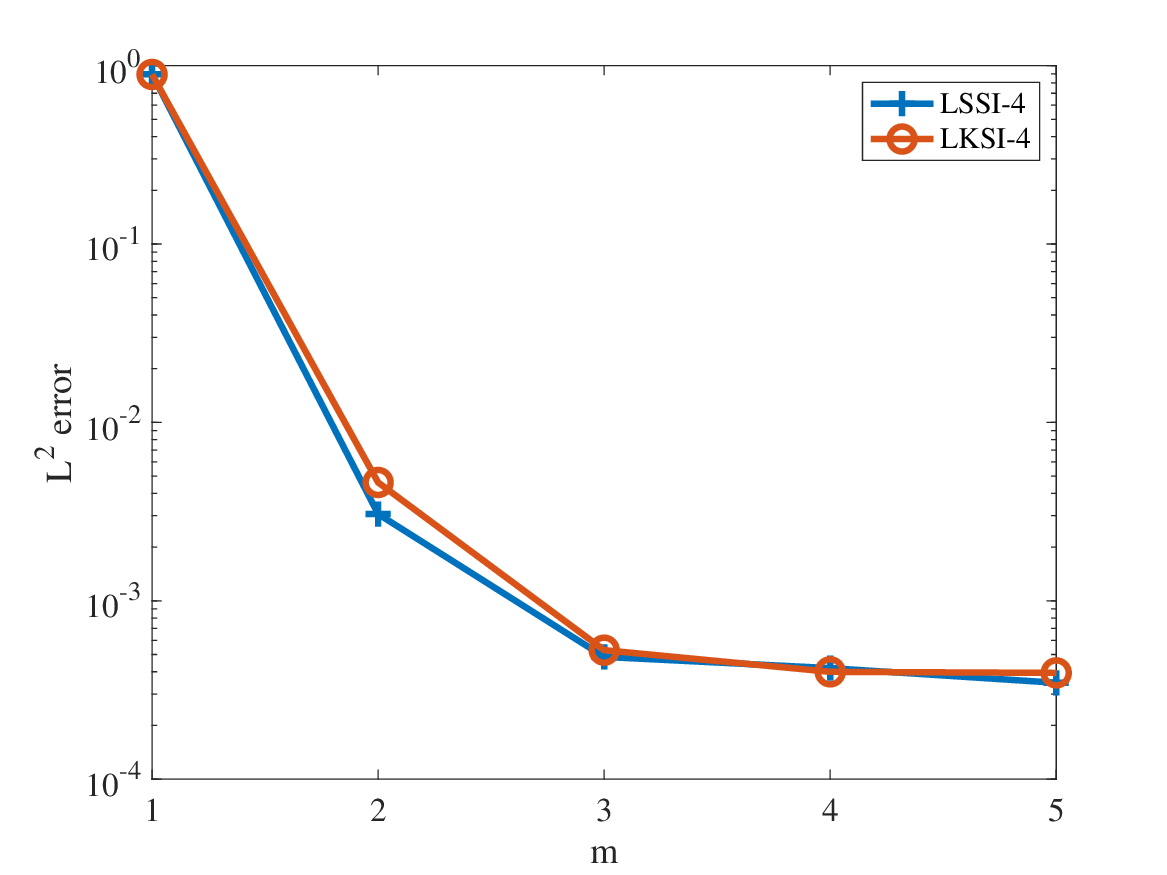}
        \label{fig:l2_ex1}
    \end{subfigure}
    
    \caption{Comparison of the Energy error and $L^2$ error against the number of oversampling layers $m$.}
    \label{fig:error_vs_m_ex1}
\end{figure}

\cref{fig:error_vs_m_ex1} further demonstrates the Energy and $L^2$ errors for the LSSI and LKSI methods as the oversampling parameter $m$ increases from 1 to 5. Both methods display a rapid exponential decrease in error as the localized domain expands.

\subsection{Performance and stability of the temporal splitting scheme} In this section, we investigate the performance of the partially explicit temporal splitting scheme introduced in \cref{sec:temporalsplitting}. Specifically, we aim to verify that the stability condition of our proposed method remains independent of the high-contrast features of the permeability field $\kappa(x)$. To ensure a consistent evaluation, we retain the baseline parameter settings established in section \ref{sec:ex1}. Furthermore, we employ $m=4$ oversampling layers and $n=4$ iteration steps for the local multiscale space construction. The primary objective here is to evaluate the approximation accuracy and numerical stability over the simulation time $T=0.1$. By comparing the fully implicit method with the temporal splitting scheme, we aim to demonstrate that the partially explicit approach maintains high accuracy in both energy and $L^2$ norms while efficiently resolving the high-frequency microscopic behaviors without the need for restrictive time-step constraints.

\cref{fig:error_time} depicts the evolution of the Energy and $L^2$ errors over the simulation time interval $(0, 0.1]$ for both the LSSI and LKSI methods. We compare the standard fully implicit time discretization with the proposed partially explicit temporal splitting scheme. For these simulations, the local explicit subspace $V_{H,2} $ are constructed using $l_{i}=2$ basis functions. As illustrated in \cref{fig:error_time} , the energy error trajectories produced by the temporal splitting scheme are virtually indistinguishable from those of the fully implicit scheme throughout the entire temporal domain. This excellent agreement verifies that treating the low-frequency modes explicitly does not introduce any significant additional temporal truncation error. Consequently, the partially explicit scheme successfully preserves the optimal spatial approximation accuracy of the LSI method while maintaining a contrast-independent stability condition, significantly reducing the computational overhead associated with implicit inversions.

\begin{figure}[htbp]
    \centering
    \begin{minipage}{0.48\textwidth}
        \centering
        \includegraphics[width=\linewidth]{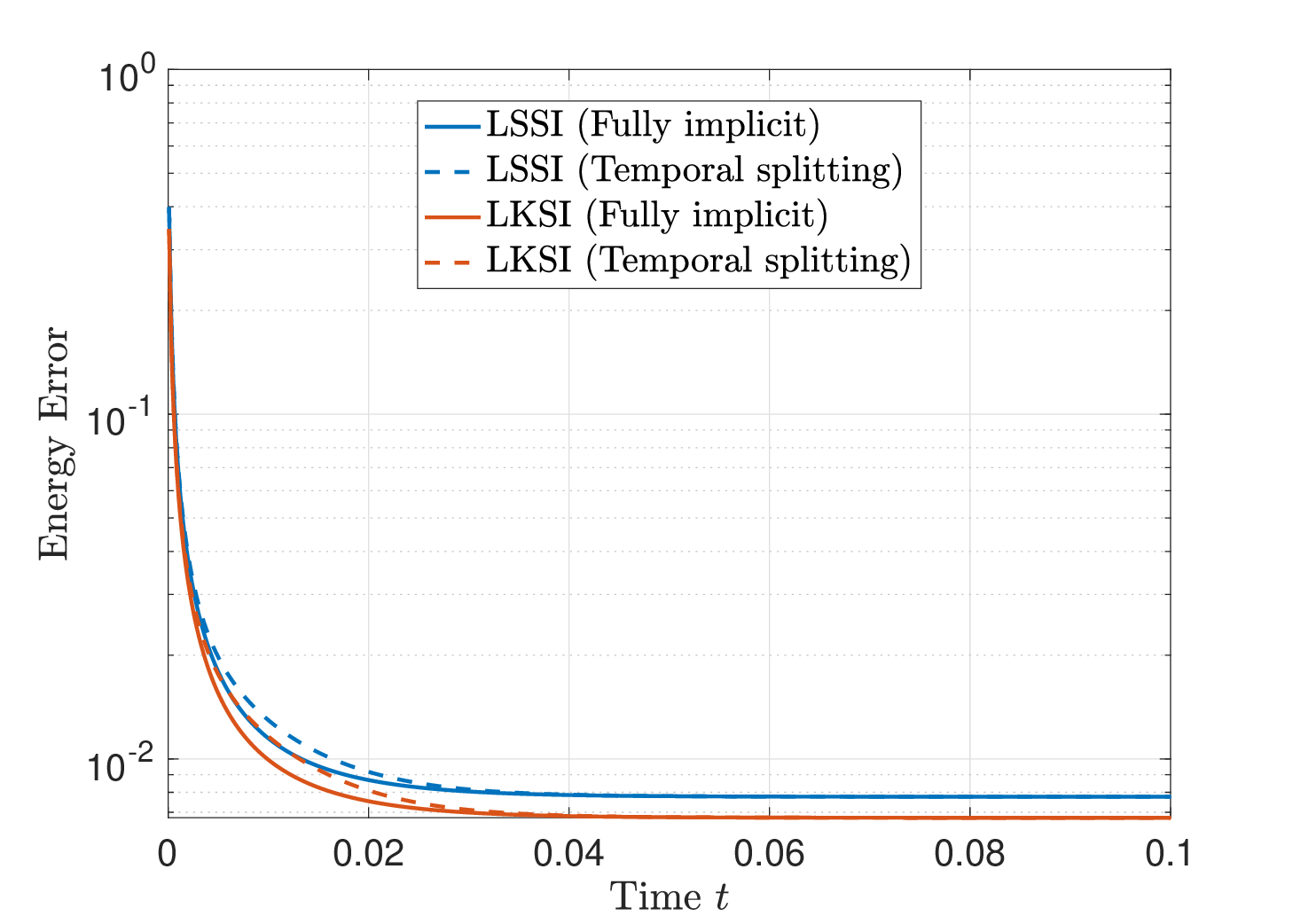}
    \end{minipage}\hfill
    \begin{minipage}{0.48\textwidth}
        \centering
        \includegraphics[width=\linewidth]{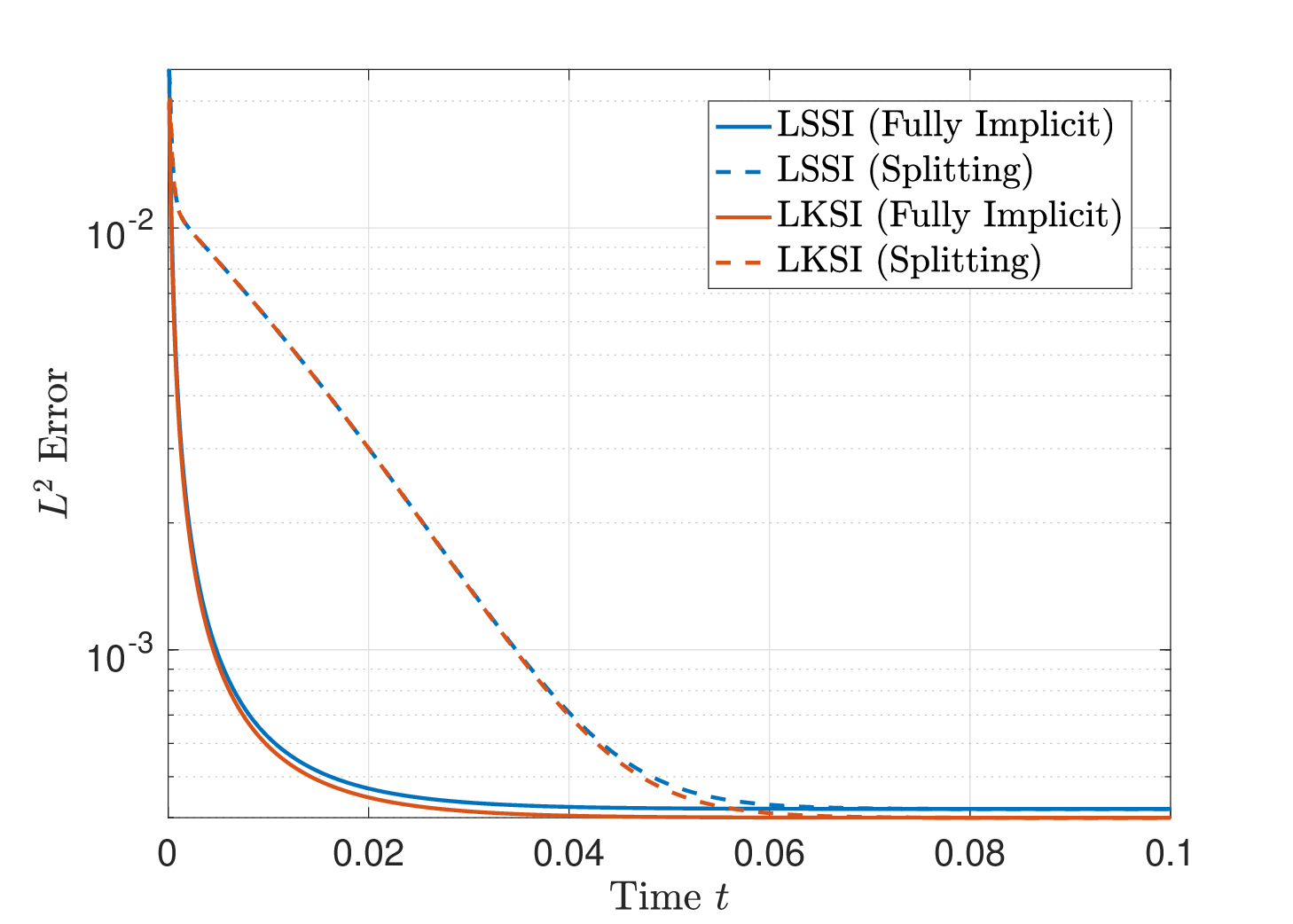}
    \end{minipage}
    \caption{Comparison of the Energy error and $L^2$ error evolution over time $t \in (0, 0.1]$ between the fully implicit scheme and the temporal splitting scheme.}
    \label{fig:error_time}
\end{figure}

To further validate the robustness of the temporal splitting scheme, we investigate the approximation errors and the maximum Rayleigh quotient of the explicit subspace under varying contrast levels. As established in section \ref{subcec:stacond}, the stability condition relies solely on the properties of the explicit subspace $V_{H,2}$ and is rendered independent of the high-contrast media. \cref{tab:contrast_variation} presents the Energy error, $L^2$ error, and the explicit subspace Rayleigh quotient (RQ) for the LSSI and LKSI methods as the permeability contrast ($\kappa_{max}/\kappa_{min}$) increases from $10^4$ to $10^8$.

As the contrast increases, both the Energy and $L^2$ errors remain remarkably stable, even exhibiting a slight decrease across all tested configurations (LSSI-1, LSSI-2, LSSI-4, and LKSI-4). This demonstrates the robustness of the constructed multiscale basis functions in effectively capturing high-contrast features. Most crucially, the maximum Rayleigh quotient in the global multiscale space $V_{H,2}$ remains bounded and does not grow proportionally with the increasing contrast. Across all methods, the RQ values plateau between $1.09 \times 10^4$ and $1.57 \times 10^4$, even at a severe contrast ratio of $10^8$. This numerical evidence strongly supports the theoretical analysis provided in Section 5.4, confirming that the stability of the partially explicit temporal splitting scheme is unconditionally independent of the medium's contrast.

\begin{table}[htbp]
\centering
\caption{Comparison of Energy error, $L^2$ error, and explicit subspace Rayleigh quotient (RQ) under varying contrast levels.}
\label{tab:contrast_variation}
\resizebox{\textwidth}{!}{
\begin{tabular}{c|cccc|cccc|cccc}
\toprule
\multirow{2}{*}{\textbf{Contrast}} & \multicolumn{4}{c|}{\textbf{Energy Error}} & \multicolumn{4}{c|}{\textbf{$L^2$ Error}} & \multicolumn{4}{c}{\textbf{Rayleigh Quotient (RQ)}} \\
\cmidrule(lr){2-5} \cmidrule(lr){6-9} \cmidrule(lr){10-13}
 & LSSI-1 & LSSI-2 & LSSI-4 & LKSI-4 & LSSI-1 & LSSI-2 & LSSI-4 & LKSI-4 & LSSI-1 & LSSI-2 & LSSI-4 & LKSI-4 \\
\midrule
$10^4$ & 1.76e-2 & 8.32e-3 & 7.77e-3 & 6.75e-3 & 1.04e-3 & 4.30e-4 & 4.19e-4 & 4.00e-4 & 13331 & 11520 & 11422 & 15744 \\
$10^5$ & 1.58e-2 & 7.48e-3 & 7.59e-3 & 6.62e-3 & 9.74e-4 & 3.69e-4 & 4.07e-4 & 3.90e-4 & 13299 & 11482 & 11288 & 15203 \\
$10^6$ & 1.52e-2 & 7.07e-3 & 7.57e-3 & 6.54e-3 & 9.65e-4 & 3.38e-4 & 4.05e-4 & 3.82e-4 & 13291 & 11470 & 11160 & 14825 \\
$10^7$ & 1.51e-2 & 6.91e-3 & 7.48e-3 & 6.47e-3 & 9.63e-4 & 3.30e-4 & 3.93e-4 & 3.74e-4 & 13287 & 11463 & 11061 & 14439 \\
$10^8$ & 1.51e-2 & 6.89e-3 & 7.41e-3 & 6.42e-3 & 9.62e-4 & 3.31e-4 & 3.85e-4 & 3.67e-4 & 13286 & 11458 & 10930 & 14292 \\
\bottomrule
\end{tabular}
}
\end{table}

\subsection{Application to realistic fractured media}
In this final numerical experiment, we evaluate the performance of the proposed LSI methods on realistic geological fractured media. Fractured porous media are characterized by highly complex, channelized networks with high permeability contrasts.
To construct the heterogeneous permeability fields, we utilize the GeoCrack dataset, a high-resolution collection of fracture edges in geological outcrops \cite{ansari2024geocrack}. We extract three distinct fracture network samples to serve as our permeability fields, denoted as $\kappa_1$, $\kappa_2$, and $\kappa_3$ (depicted in \cref{fig:ex3_kappa}). 
The fractures are assigned permeability values orders of magnitude higher than the surrounding rock matrix, effectively simulating highly conductive flow channels.

\begin{figure}[htbp]
    \centering
    \begin{subfigure}{0.32\textwidth}
        \includegraphics[width=\textwidth]{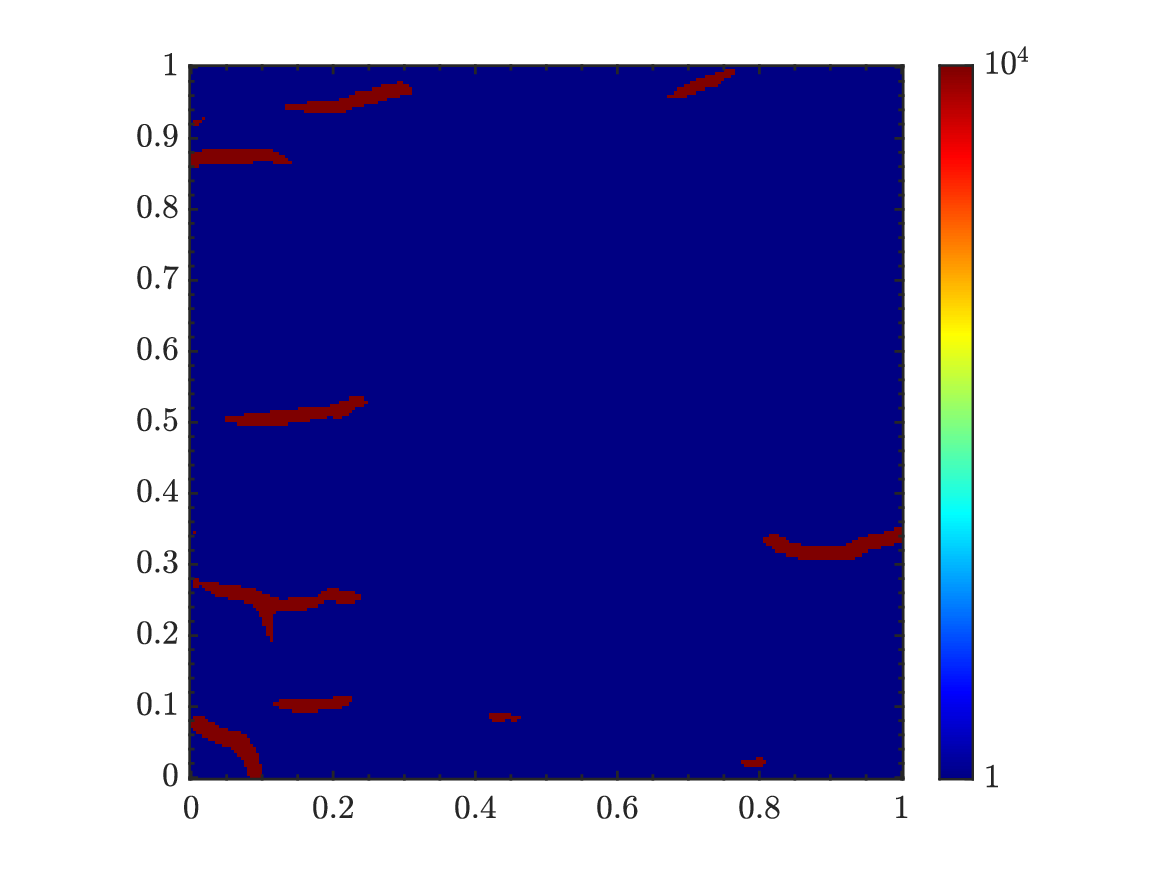}
        \caption{$\kappa_1$}
    \end{subfigure}
    \hfill
    \begin{subfigure}{0.32\textwidth}
        \includegraphics[width=\textwidth]{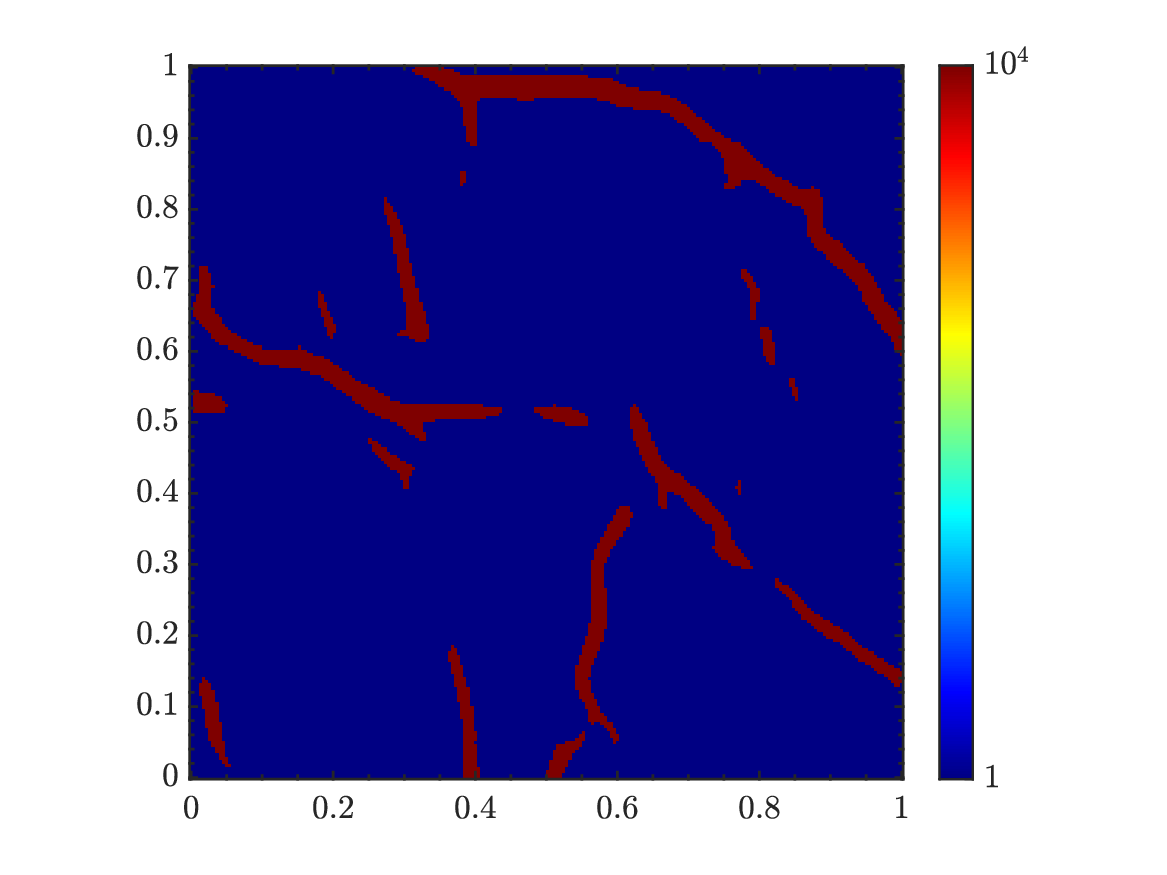}
        \caption{$\kappa_2$}
    \end{subfigure}
    \hfill
    \begin{subfigure}{0.32\textwidth}
        \includegraphics[width=\textwidth]{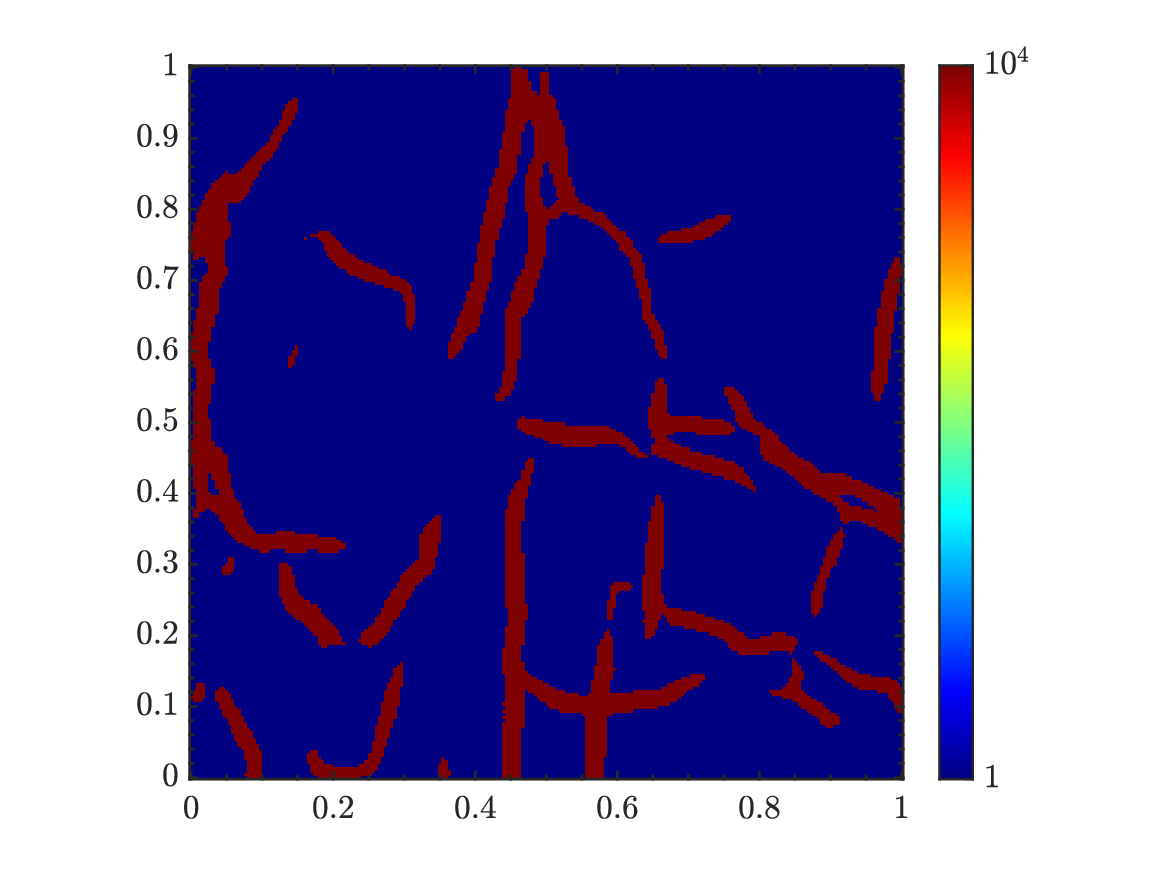}
        \caption{$\kappa_3$}
    \end{subfigure}
    \caption{Three realistic fractured permeability fields generated from the GeoCrack dataset.}
    \label{fig:ex3_kappa}
\end{figure}

The spatial domain, coarse and fine grid resolutions, and temporal discretization parameters remain consistent with those established in section \ref{sec:ex1}. We evaluate the spatial approximation accuracy by comparing the Energy and $L^2$ errors for the LSSI method at iteration step $n=4$ against the LKSI method across iteration steps ranging from $n=1$ to $n=8$. For all cases, the oversampling parameter is fixed at $m=4$.

\cref{tab:fracture_results} summarizes the numerical results across the three different fracture configurations. Consistent with our previous observations, the LKSI method exhibits robust and rapid convergence as the iteration step $n$ increases. Notably, the LKSI method consistently achieves comparable or superior accuracy to LSSI-4 with fewer iteration steps. For instance, in terms of Energy error, LKSI surpasses the accuracy of LSSI-4 at $n=4$ for $\kappa_1$, at $n=3$ for $\kappa_2$, and as early as $n=2$ for $\kappa_3$. As the iterations continue up to $n=8$, the errors for LKSI further decrease significantly. 
These results substantiate that the proposed LSI framework—and the LKSI method in particular—provides a highly effective, stable, and computationally efficient approach for simulating transient diffusion in complex, real-world multiscale environments.
This experiment confirms that the proposed LSI framework—and particularly the LKSI method—is highly effective, stable, and computationally efficient for solving parabolic equations in complex, real-world multiscale environments.

\begin{table}[htbp]
\centering
\caption{Comparison of Energy and $L^2$ errors for LSSI-4 and LKSI (iterations 1 to 8) methods on realistic fractured media.}
\label{tab:fracture_results}
\resizebox{\textwidth}{!}{
\begin{tabular}{cc|c|cccccccc}
\toprule
\multirow{2}{*}{\textbf{Field}} & \multirow{2}{*}{\textbf{Error}} & \textbf{LSSI} & \multicolumn{8}{c}{\textbf{LKSI}} \\
\cmidrule(lr){3-3} \cmidrule(lr){4-11}
 & & $n=4$ & $n=1$ & $n=2$ & $n=3$ & $n=4$ & $n=5$ & $n=6$ & $n=7$ & $n=8$ \\
\midrule
\multirow{2}{*}{$\kappa_1$} & Energy & 5.76e-03 & 3.16e-02 & 1.84e-02 & 5.99e-03 & 5.03e-03 & 4.75e-03 & 4.49e-03 & 4.32e-03 & 4.15e-03 \\
 & $L^2$ & 2.49e-04 & 2.80e-03 & 1.16e-03 & 2.73e-04 & 1.86e-04 & 1.56e-04 & 1.28e-04 & 1.08e-04 & 8.88e-05 \\
\midrule
\multirow{2}{*}{$\kappa_2$} & Energy & 1.26e-02 & 3.61e-02 & 2.25e-02 & 1.12e-02 & 9.92e-03 & 9.00e-03 & 8.09e-03 & 6.74e-03 & 6.19e-03 \\
 & $L^2$ & 9.74e-04 & 4.49e-03 & 2.09e-03 & 9.43e-04 & 7.69e-04 & 6.42e-04 & 5.04e-04 & 3.17e-04 & 2.31e-04 \\
\midrule
\multirow{2}{*}{$\kappa_3$} & Energy & 3.48e-02 & 4.01e-02 & 2.97e-02 & 2.34e-02 & 2.04e-02 & 1.82e-02 & 1.50e-02 & 1.32e-02 & 1.18e-02 \\
 & $L^2$ & 3.90e-03 & 6.26e-03 & 4.45e-03 & 3.00e-03 & 2.31e-03 & 1.85e-03 & 1.24e-03 & 9.52e-04 & 6.78e-04 \\
\bottomrule
\end{tabular}
}
\end{table}

\section{Conclusions}
\label{sec:conclu}
In this paper, we have extended the Localized Subspace Iteration (LSI) framework to solve time-dependent multiscale parabolic equations. By iteratively approximating the dominant eigenspaces of local inverse operators, the proposed method constructs optimal, low-dimensional multiscale trial spaces that effectively capture the essential slow-decaying low-frequency modes of the solution. To overcome the severe time-step restrictions imposed by highly heterogeneous media, we decoupled the basis construction into an offline stage and introduced a contrast-independent partially explicit temporal splitting scheme for the online stage. This splitting strategy explicitly evolves the dominant low-frequency modes while implicitly treating the high-frequency microscopic corrections. Consequently, it guarantees unconditional stability with respect to high-contrast coefficients and significantly reduces the computational burden associated with global implicit inversions. We established rigorous a priori error estimates in both the energy and $L^2$ norms. Comprehensive numerical experiments, including simulations on realistic fracture networks from the GeoCrack dataset, validated our theoretical findings.
The results demonstrate that the LSI framework, particularly the LKSI variant, achieves exceptional accuracy, convergence, and substantial computational efficiency, establishing it as a highly capable numerical tool for complex multiscale simulations.



\bibliographystyle{siamplain}
\bibliography{references}

@article{Hornung1990,
  author = {Hornung, Ulrich and Showalter, Ralph E},
  title = {Diffusion models for fractured media},
  journal = {J. Math. Anal. Appl.},
  volume = {147},
  number = {1},
  pages = {69--80},
  year = {1990}
}

@article{Biot1962,
  author = {Biot, M. A.},
  title = {Mechanics of deformation and acoustic propagation in porous media},
  journal = {J. Appl. Phys.},
  volume = {33},
  number = {4},
  pages = {1482--1498},
  year = {1962}
}

@book{Chen2006,
  author = {Chen, Zhangxin and Huan, Guanren and Ma, Yuanle},
  title = {Computational Methods for Multiphase Flows in Porous Media},
  publisher = {SIAM},
  address = {Philadelphia},
  year = {2006}
}

@article{Babuska1983,
  author = {Babu{\v{s}}ka, Ivo and Osborn, John E.},
  title = {Generalized finite element methods: their performance and their relation to mixed methods},
  journal = {SIAM J. Numer. Anal.},
  volume = {20},
  number = {3},
  pages = {510--536},
  year = {1983}
}

@article{Hou1997,
  author = {Hou, Thomas Yizhao and Wu, Xiao-Hui},
  title = {A multiscale finite element method for elliptic problems in composite materials and porous media},
  journal = {J. Comput. Phys.},
  volume = {134},
  number = {1},
  pages = {169--189},
  year = {1997}
}

@article{Hou1999,
  author = {Hou, Thomas Yizhao and Wu, Xiao-Hui and Cai, Zhiqiang},
  title = {Convergence of a multiscale finite element method for elliptic problems with rapidly oscillating coefficients},
  journal = {Math. Comp.},
  volume = {68},
  pages = {913--943},
  year = {1999}
}

@article{Malqvist2014,
  author = {M{\aa}lqvist, Axel and Peterseim, Daniel},
  title = {Localization of elliptic multiscale problems},
  journal = {Math. Comp.},
  volume = {83},
  number = {290},
  pages = {2583--2603},
  year = {2014}
}

@book{Malqvist2020,
  author = {M{\aa}lqvist, Axel and Peterseim, Daniel},
  title = {Numerical homogenization by localized orthogonal decomposition},
  publisher = {SIAM},
  year = {2020}
}

@article{Efendiev2013,
  author = {Efendiev, Yalchin and Galvis, Juan and Hou, Thomas Yizhao},
  title = {Generalized multiscale finite element methods ({GMsFEM})},
  journal = {J. Comput. Phys.},
  volume = {251},
  pages = {116--135},
  year = {2013}
}

@article{Babuska2011,
  author = {Babu{\v{s}}ka, Ivo and Lipton, Robert},
  title = {Optimal local approximation spaces for generalized finite element methods with application to multiscale problems},
  journal = {Multiscale Model. Simul.},
  volume = {9},
  number = {1},
  pages = {373--406},
  year = {2011}
}

@article{Chung2016,
  author = {Chung, Eric T. and Efendiev, Yalchin and Leung, Wing Tat and Ye, S.},
  title = {Generalized multiscale finite element methods for space-time heterogeneous parabolic equations},
  journal = {Multiscale Model. Simul.},
  volume = {14},
  number = {2},
  pages = {712--737},
  year = {2016}
}

@article{Li2019,
  author = {Li, M. and Chung, Eric T. and Jiang, Lijian},
  title = {A constraint energy minimizing generalized multiscale finite element method for parabolic equations},
  journal = {Multiscale Model. Simul.},
  volume = {17},
  number = {3},
  pages = {996--1018},
  year = {2019}
}

@article{Guan2024,
  author = {Guan, Xiaofei and Jiang, Lijian and Wang, Yajun},
  title = {Regularized coupling multiscale method for thermomechanical coupled problems},
  journal = {J. Comput. Phys.},
  volume = {499},
  pages = {112737},
  year = {2024}
}

@article{Guan2026,
  author = {Guan, Xiaofei and Jiang, Lijian and Wang, Yajun and Yang, Zihao},
  title = {Localized subspace iteration methods for multiscale problems},
  journal = {J. Comput. Phys.},
  volume = {547},
  pages = {114559},
  year = {2026}
}

@book{Bensoussan1978,
  author = {Bensoussan, Alain and Lions, Jacques-Louis and Papanicolaou, George},
  title = {Asymptotic Analysis for Periodic Structures},
  publisher = {North-Holland},
  address = {Amsterdam},
  year = {1978}
}

@article{Allaire1992,
  author = {Allaire, Gr{\'e}goire},
  title = {Homogenization and two-scale convergence},
  journal = {SIAM J. Math. Anal.},
  volume = {23},
  number = {6},
  pages = {1482--1518},
  year = {1992}
}

@article{Hughes1998,
  author = {Hughes, Thomas J. R. and Feij{\'o}o, G. R. and Mazzei, L. and Quincy, J.-B.},
  title = {The variational multiscale method---a paradigm for computational mechanics},
  journal = {Comput. Methods Appl. Mech. Eng.},
  volume = {166},
  number = {1--2},
  pages = {3--24},
  year = {1998}
}

@article{E2003,
  author = {{E}, Weinan and Engquist, Bj{\"o}rn},
  title = {The heterogeneous multiscale methods},
  journal = {Commun. Math. Sci.},
  volume = {1},
  number = {1},
  pages = {87--132},
  year = {2003}
}

@book{saad2011numerical,
  title={Numerical methods for large eigenvalue problems: revised edition},
  author={Saad, Yousef},
  year={2011},
  publisher={SIAM}
}

@article{chung2021contrast,
  title={Contrast-independent partially explicit time discretizations for multiscale flow problems},
  author={Chung, Eric T. and Efendiev, Yalchin and Leung, Wing Tat and Vabishchevich, Petr N},
  journal={J. Comput. Phys.},
  volume={445},
  pages={110578},
  year={2021},
  publisher={Elsevier}
}

@misc{ansari2024geocrack,
  author = {Ansari, M. Y.},
  title = {{GeoCrack}: A High-Resolution Dataset of Fracture Edges in Geological Outcrops},
  year = {2024},
  publisher = {Harvard Dataverse},
  doi = {10.7910/DVN/E4OXHQ}
}

@article{liu2024numerical,
  title={Numerical simulation of multiphase multi-physics flow in underground reservoirs: Frontiers and challenges},
  author={Liu, Piyang and Zhao, Jianlin and Li, Zheng and Wang, Han},
  journal={Capillarity},
  volume={12},
  number={3},
  pages={72--79},
  year={2024}
}

@article{lyu2025multiscale,
  title={Multiscale modeling for multiphase flow and reactive mass transport in subsurface energy storage: A review},
  author={Lyu, Xiaocong and Wang, Wendong and Voskov, Denis and Liu, Piyang and Chen, Li},
  journal={Adv. Geo-Energy Res.},
  volume={15},
  number={3},
  pages={245--260},
  year={2025}
}

@article{dong2023multi,
  title={Multi-scale computational method for nonlinear dynamic thermo-mechanical problems of composite materials with temperature-dependent properties},
  author={Dong, Hao and Cui, Junzhi and Nie, Yufeng and Ma, Ruyun and Jin, Ke and Huang, Dongmei},
  journal={Commun. Nonlinear Sci. Numer. Simul.},
  volume={118},
  pages={107000},
  year={2023},
  publisher={Elsevier}
}

@article{abdulle2012heterogeneous,
  title={The heterogeneous multiscale method},
  author={Abdulle, Assyr and {E}, Weinan and Engquist, Bj{\"o}rn and Vanden-Eijnden, Eric},
  journal={Acta Numer.},
  volume={21},
  pages={1--87},
  year={2012},
  publisher={Cambridge University Press}
}

@article{hughes2007variational,
  title={Variational multiscale analysis: the fine-scale {Green}'s function, projection, optimization, localization, and stabilized methods},
  author={Hughes, Thomas J. R. and Sangalli, Giancarlo},
  journal={SIAM J. Numer. Anal.},
  volume={45},
  number={2},
  pages={539--557},
  year={2007},
  publisher={SIAM}
}

@article{ma2022novel,
  title={Novel design and analysis of generalized finite element methods based on locally optimal spectral approximations},
  author={Ma, Chupeng and Scheichl, Robert and Dodwell, Tim},
  journal={SIAM J. Numer. Anal.},
  volume={60},
  number={1},
  pages={244--273},
  year={2022},
  publisher={SIAM}
}

@book{chung2023multiscale,
  title={Multiscale Model Reduction: Multiscale Finite Element Methods and Their Generalizations},
  author={Chung, Eric T. and Efendiev, Yalchin and Hou, Thomas Yizhao},
  series={Applied Mathematical Sciences},
  volume={212},
  year={2023},
  publisher={Springer}
}

@article{chung2018constraint,
  title={Constraint energy minimizing generalized multiscale finite element method},
  author={Chung, Eric T. and Efendiev, Yalchin and Leung, Wing Tat},
  journal={Comput. Methods Appl. Mech. Eng.},
  volume={339},
  pages={298--319},
  year={2018},
  publisher={Elsevier}
}

@article{brahim1992correctors,
  title={Correctors for the homogenization of the wave and heat equations},
  author={Brahim-Otsmane, Safia and Francfort, Gilles A and Murat, Fran{\c{c}}ois},
  journal={J. Math. Pures Appl.},
  volume={71},
  number={3},
  pages={197--231},
  year={1992}
}

@article{jiang2007multiscale,
  title={Multiscale methods for parabolic equations with continuum spatial scales},
  author={Jiang, Lijian and Efendiev, Yalchin and Ginting, Victor},
  journal={Discrete Contin. Dyn. Syst. Ser. B},
  volume={8},
  number={4},
  pages={833--859},
  year={2007},
  publisher={Discrete Contin. Dyn. Syst. Ser. B}
}

@article{john2006two,
  title={A two-level variational multiscale method for convection-dominated convection--diffusion equations},
  author={John, Volker and Kaya, Songul and Layton, William},
  journal={Comput. Methods Appl. Mech. Eng.},
  volume={195},
  number={33--36},
  pages={4594--4603},
  year={2006},
  publisher={Elsevier}
}

@article{ming2007analysis,
  title={Analysis of the heterogeneous multiscale method for parabolic homogenization problems},
  author={Ming, Pingbing and Zhang, Pingwen},
  journal={Math. Comp.},
  volume={76},
  number={257},
  pages={153--177},
  year={2007}
}

@article{maalqvist2018multiscale,
  title={Multiscale techniques for parabolic equations},
  author={M{\aa}lqvist, Axel and Persson, Anna},
  journal={Numer. Math.},
  volume={138},
  number={1},
  pages={191--217},
  year={2018},
  publisher={Springer}
}

@article{ascher1995implicit,
  title={Implicit-explicit methods for time-dependent partial differential equations},
  author={Ascher, Uri M. and Ruuth, Steven J. and Wetton, Brian T. R.},
  journal={SIAM J. Numer. Anal.},
  volume={32},
  number={3},
  pages={797--823},
  year={1995},
  publisher={SIAM}
}

@book{evans2022partial,
  title={Partial Differential Equations},
  author={Evans, Lawrence C.},
  volume={19},
  year={2022},
  publisher={American Mathematical Society}
}

@article{galvis2010domain,
  title={Domain decomposition preconditioners for multiscale flows in high contrast media: reduced dimension coarse spaces},
  author={Galvis, Juan and Efendiev, Yalchin},
  journal={Multiscale Modeling \& Simulation},
  volume={8},
  number={5},
  pages={1621--1644},
  year={2010},
  publisher={SIAM}
}
\end{document}